\date{\today}
\newtheorem{teor}{Theorem}[section]
\newtheorem{propo}[teor]{Proposition}
\newtheorem{coro}[teor]{Corollary}
\theoremstyle{definition}
\newtheorem{defi}[teor]{Definition}
\theoremstyle{remark}
\newtheorem{rem}{Remark}
\newfont{\script}{eusm10 scaled\magstep1}
\newcommand{\feriet}[8]{\,{}F^{#1}_{#2}\left(\!\!%
\begin{array}{c|c}
\begin{array}{cc}{\displaystyle{#3}:\displaystyle{#4}}\\[-0.1ex]
{\displaystyle{#5}:\displaystyle{#6}} \end{array} &
\,{\displaystyle{#7},\displaystyle{#8}}
\end{array} \right)}
\title[Bivariate partial differential equations and OP solutions] {Bivariate second--order
linear partial differential equations and orthogonal polynomial solutions}
\author[Area]{I. Area}
\author[Godoy]{E. Godoy}
\author[Ronveaux]{A. Ronveaux}
\author[Zarzo]{A. Zarzo}
\address[Area]{Departamento de Matem\'{a}tica Aplicada II, E.T.S.E. Telecomunicaci\'{o}n,
Universidade de Vigo, 36310--Vigo, Spain.}
\email[Area]{area@dma.uvigo.es}
\address[Godoy]{Departamento de Matem\'{a}tica Aplicada II, E.T.S. Ingenieros
Industriales, Universidade de Vigo, 36310--Vigo,
Spain.}\email[Godoy]{egodoy@dma.uvigo.es}
\address[Ronveaux]{Departement de Math\'{e}matique,
Universit\'{e} Catholique de Louvain, B\^atiment Marc de
Hemptinne, Chemin du Cyclotron 2, B--1348 Louvain--la--Neuve,
Belgium.} \email[Ronveaux]{andre.ronveaux@student.uclouvain.be}
\address[Zarzo]{Instituto Carlos I de F\'{\i}sica Te\'{o}rica y Computacional,
Facultad de Ciencias, Universidad de Granada, Spain and
Departamento de Matem\'{a}tica Aplicada, E.T.S. Ingenieros
Industriales, Universidad Polit\'{e}cnica de Madrid, Spain.}
\email[Zarzo]{azarzo@etsii.upm.es}
\subjclass[2010]{Primary 42C05 \ Secondary 33C70, 33C65, 33C50.}
\keywords{Second order admissible potentially self--adjoint partial differential equations of hypergeometric type,
bivariate orthogonal polynomials, Rodrigues formula, generalized Kamp\'e de F\'eriet hypergeometric series, Appell polynomials, Koornwinder polynomials, connection problems}
\begin{document}

\begin{abstract}
In this paper we construct the main algebraic and differential properties and the weight functions of orthogonal
polynomial solutions of bivariate second--order linear partial differential equations, which are admissible
potentially self--adjoint and of hypergeometric type. General formulae for all these properties are
obtained explicitly in terms of the polynomial coefficients of the partial differential equation, using
vector matrix notation. Moreover, Rodrigues representations for the polynomial eigensolutions
and for their partial derivatives of any order are given. Finally, as illustration, these results are
applied to specific Appell and Koornwinder orthogonal polynomials, solutions of the same partial differential equation.
\end{abstract}
\maketitle

\section{Introduction}

The theory of orthogonal polynomials in one variable is in permanent expansion due to its relationship with other areas of
mathematics and also with several applications in physics and engineering. They provide a natural way to solve many
types of important differential equations of mathematical physics, expanding solutions in appropriate Fourier series
of orthogonal polynomial basis. They play therefore an important role in the study of wave mechanics, heat conduction,
electromagnetic theory, quantum mechanics or mathematical statistics.

In this context, it is first relevant to study whether a (one variable) polynomial fa\-mi\-ly,
$\left\{p_n(x)\right\}_{n\in\mathbb{N}}$ ($x\in\mathbb{R}$), is orthogonal. This problem is solved in
different ways but the Favard's theorem \cite{SZE}, linking orthogonality and the fundamental three--term recurrence relation
\begin{equation}\label{ttrr1v}
x p_n(x) = \alpha_n p_{n+1}(x) + \beta_n p_{n}(x) + \gamma_n p_{n-1}(x)\,, \quad \gamma_{n} \neq 0,
\end{equation}
provides certainly the most powerful characterization \cite{CHI}. Here, if $p_n(x) = g_{n,n} x^n + g_{n,n-1} x^{n-1} + g_{n,n-2}x^{n-2} + \cdots$, then orthogonality of the $p_n$--family leads easily to the well known expressions (see e.g. \cite{CHI}, \cite{Koe}, \cite[equation (1.4.17), p. 14]{NSU} or \cite[p. 36]{NU}):
\begin{equation}\label{ttrrcof1v}
\alpha_n = \frac{g_{n,n}}{g_{n+1,n+1}}\,,\quad \beta_n = \frac{g_{n,n-1} - \alpha_n g_{n+1,n} }{g_{n,n}}\,,\quad \gamma_n = \frac{g_{n,n-2} - \beta_n g_{n,n-1} - \alpha_n g_{n+1,n-1}}{g_{n-1,n-1}}\,.
\end{equation}

It is also important to provide ways of constructing efficiently these polynomials. For, several approaches are at hand. Besides the use of the recurrence (\ref{ttrr1v}) itself, we can mention (in a non-exhaustive way) the generating function methods, those based on a Rodrigues formula  or the hypergeometric approach which gives nice and useful re\-pre\-sen\-ta\-tions of the polynomials in terms of hypergeometric series.

As it is very well known, among all the one variable orthogonal polynomials, the four classical continuous families of Jacobi, Laguerre, Hermite and Bessel, are those sharing the widest set of properties. Besides the three--term recurrence~(\ref{ttrr1v}) \cite{CHI,NU,SZE}, they can be characterized in a number of ways, e.g. they are orthogonal polynomial solutions of the  hypergeometric type differential equation \cite{BOCHNER,NU} and the $k$--th derivatives of each family are again orthogonal and belong to the same family \cite{AlSal90,NU}. Moreover, the orthogonality weight functions satisfy Pearson--type equations \cite{BOCHNER,NSU} giving rise to Rodrigues formulae \cite{AlSal90,NU} for the corresponding orthogonal polynomials and for their derivatives of any order. Also, the orthogonal polynomials satisfy a number of algebraic and differential properties such as derivative representations \cite{AlSal90,MBP} or also structure relations \cite{AlSal90,CHI,KOOR}, among other properties. The list of references in this paragraph is not exhaustive but only indicative of the kind of references that could be examined on this topic.

In these classical settings, it is remarkable that the coefficients appearing in all the aforementioned algebraic
and differential characterizations can be explicitly computed in terms of the polynomial
coefficients $\sigma(x)$ and $\tau(x)$ of the hy\-per\-geo\-me\-tric--type differential equation  \cite{BEZ1,GRZA,NU,BEZ2,YDZ94, YDN94, Z07}
\[
\sigma(x) y''(x) + \tau(x) y'(x) + \lambda_{n} y(x)=0\,, \qquad
\lambda_{n}=-n \tau'-\frac{1}{2}n(n-1)\sigma''.
\]

Our main contribution is to extend this remarkable property to the bivariate situation for polynomial solutions of admissible potentially
self--adjoint partial differential equations of hypergeometric--type \cite{KS, LYS1, LYS2}. These polynomials play an important role in many
applications, for instance in spectral/$hp$--finite element methods for solving partial differential equations \cite{DUBINER,KARNIADAKIS}.

One essential difference between polynomials in one variable and in several variables is the lack of an obvious basis in the latter \cite{DUXU}. One
possibility to avoid this problem is to consider graded lexicographical order and use the matrix vector representation, first
introduced by Kowalski \cite{KOW,KOW2} and afterwards considered by Xu \cite{XU93,XU94}. In fact, using this point of view, in \cite{AFPP}
the authors proved some structure and orthogonality relations for the successive partial derivatives of the vector orthogonal polynomials
associated with a quasi--definite moment functional which satisfies a Pearson--type partial differential equation.

In this paper we deal with bivariate polynomials written in vector representation (and graded lexicographical order) which
are solutions of admissible potentially self--adjoint linear second order partial differential equation of hypergeometric type. In this context, we
prove that (as it happens in the one variable hypergeometric type case) the coefficients characterizing the three--term recurrence relations, the
first structure relations and the derivative representations fulfilled by the vector polynomials can be written explicitly in terms of the
coefficients of the partial differential equation they satisfy. In the bivariate discrete case some results in this direction have been already given in \cite{RAG,RAG2,RAG3}.

The structure of the paper is as follows: In Section \ref{Sec:2},
after introducing basic definitions and notations, we present in
Proposition \ref{pearsonEDP} the general framework to be considered
through the paper, i.e. admissible potentially self--adjoint second
order partial differential equation of hypergeometric type. In
Section \ref{Sec:rodrigues} we give the partial differential
equations for the partial derivatives of the eigensolutions and we
construct the corresponding weight functions for the orthogonal
polynomials. Then, the relations linking these weight functions are
obtained and they allow us to deduce a Rodrigues formula for the
orthogonal polynomial solutions and for their partial derivatives of
any order. In Sections \ref{Sec:4} and \ref{Sec:5}, using vector
matrix notation \cite{DUXU, KOW, KOW2}, general formulae for the
main algebraic and differential properties (three--term recurrence
relations, structure relations and derivative representations) are
explicitly obtained in terms of the coefficients fully
characterizing the partial differential equation, both in monic and
non--monic cases. Finally, in Section \ref{Sec:6} our results are
applied to three different eigensolutions of the same partial
differential equation, which are orthogonal on the same domain with
respect to the same weight function $\varrho(x) = x^{\alpha-1}\,
y^{\beta-1}$ and so, they form biorthogonal families. First, the two
parameter monic Appell polynomials family \cite{ERDII} is analyzed
in detail. From that, for the two remaining cases (non--monic Appell
polynomials \cite{APP} and Koornwinder polynomials \cite{KOOR75})
the corresponding properties can be deduced in two ways: from the
results in Section \ref{Sec:4} or by means of their expansions in
terms of monic Appell polynomials which we establish here.

\section{Vector representation and admissible partial differential equations of hypergeometric type}\label{Sec:2}

Let $\textbf{x}=(x,y)\in\mathbb{R}^2$, and let $\textbf{x}^n$ ($n\in \mathbb{N}_0$) denote the column vector of the monomials $x^{n-k} y^{k}$, whose elements are arranged in graded lexicographical order (see \cite[p. 32]{DUXU}):
\begin{equation}\label{MONO}
\textbf{x}^n= (x^{n-k}y^{k})\,,\quad 0 \leq k \leq n, \quad  n\in \mathbb{N}_0\,.
\end{equation}
Let $\{P_{n-k,k}^n(x,y)\}$ be a sequence of polynomials in the space $\Pi_n^2$ of all polynomials of total degree at most $n$ in two variables, $\textbf{x}=(x,y)$, with real coefficients. Such polynomials are finite sums of terms of the form $ax^{n-k}y^{k}$, where $a \in \mathbb{R}$.

From now on the (column) vector representation \cite{KOW,KOW2} will be adopted, so that ${\mathbb{P}}_n$ will denote the (column) polynomial vector
\begin{equation}\label{defPn}
{\mathbb{P}}_n= (P_{n,0}^n(x,y), P_{n-1,1}^n(x,y), \dots,P_{1,n-1}^n(x,y), P_{0,n}^n(x,y))^\text{T}.
\end{equation}
Then, each polynomial vector ${\mathbb{P}}_n$ can be written in terms of the basis (\ref{MONO}) as:
\begin{equation}\label{EXPP}
{\mathbb{P}}_n= G_{n,n}\textbf{x}^n+ G_{n,n-1}\textbf{x}^{n-1}+ \dots + G_{n,0}\,\textbf{x}^0,
\end{equation}
where $G_{n,j}$ are matrices of size $(n+1)\times(j+1)$ and the leading matrix coefficient $G_{n,n}$ is a nonsingular square matrix of size $(n+1)\times(n+1)$.

\begin{defi}[Monic polynomial vector]\label{monic} A polynomial vector $\widehat{\mathbb{P}}_n$ is said to be monic if its leading matrix coefficient $\widehat{G}_{n,n}$ is the identity matrix (of size $(n+1)\times(n+1)$); i.e.:
\begin{equation}\label{EXPPMonico}
\widehat{\mathbb{P}}_n= \textbf{x}^n+ \widehat{G}_{n,n-1}\textbf{x}^{n-1}+ \dots + \widehat{G}_{n,0}\,\textbf{x}^0\,.
\end{equation}
Then, each of its polynomial entries $\widehat{P}_{n-k,k}^n(x,y)$ are of the form:
\begin{equation}\label{CompoMonico}
\widehat{P}_{n-k,k}^n(x,y) = x^{n-k} y^{k} + \text{terms of lower total degree}\,.
\end{equation}
In what follows the ``hat" notation $\widehat{\mathbb{P}}_n$ will represent monic polynomials.
\end{defi}

\begin{defi}[Orthogonality]\label{ortho}
Let $\mathcal{L}$ be a moment linear functional acting on $\Pi_n^2$. A sequence of polynomials $\{P_{n-k,k}^n(x,y)\} \subset \Pi_n^2 $ ($n \in \mathbb{N}_0$), is said to be orthogonal with respect to $\mathcal{L}$ or, equivalently, $\{{\mathbb{P}}_n\}_{n \geq 0}$ (as defined by Eqs.~(\ref{defPn})--(\ref{EXPP})) is a vector orthogonal polynomial family with respect to $\mathcal{L}$, if for each $n\in \mathbb{N}_0$ there exist an invertible matrix $H_n$ of size $n+1$ such that:
\begin{align}\label{ortog1}
&\mathcal{L} \left[ (\textbf{x}^m{\mathbb{P}}_n^T) \right]=0\in \mathcal{M}^{(m+1,n+1)}\,, \quad n >m,\\\label{ortog2}
&\mathcal{L} \left[ (\textbf{x}^n{\mathbb{P}}_n^T) \right]=H_n\in \mathcal{M}^{(n+1,n+1)}\,.
\end{align}
\end{defi}

An integral representation of this orthogonality functional
$\mathcal{L}$ can be constructed by means of a weight function
$\varrho := \varrho(x,y)$ defined in a certain domain $D\subset
\mathbb{R}^2$:
\begin{equation}\label{FUNCTIONAL0}
\mathcal{L}(P)=\iint_{D} P(x,y)\varrho(x,y)\,dx\,dy\,,\quad P \in \Pi_n^2\,,
\end{equation}
which is defined in the set $\Pi_n^2$ provided that all the above integrals exist. Then, the family $\{{\mathbb{P}}_n\}_{n \geq 0}$ is said to be orthogonal with respecto to $\varrho$ in the domain $D$.

In this multivariate context, Bochner \cite{BOCHNER} posed the problem of identifying those families of polynomials which are eigenfunctions of a second order linear partial differential operator. Krall and Sheffer \cite{KS} started to study eigenfunctions which are orthogonal over a domain giving conditions of admissibility and a first attempt of classifying admissible equations. Engelis \cite{Engelis} gave a detailed list of second order linear partial differential equations for which orthogonal polynomial in two variables are solutions. This question was afterwards studied and systematically described by Suetin \cite{SUE}. In this paper, we analyze admissible potentially self--adjoint partial differential equations of hypergeometric type.

In order to present this study, we consider a bivariate class of
linear partial differential equations, introduced as ``the basic
class" by Lyskova \cite{LYS2} for the multivariate case (see also
\cite{GRANADA2009}) and called here hypergeometric type equations:
\begin{multline}\label{original}
(a_1 x^2 + b_1x + c_1)\partial_{xx}u(x,y) + 2(a_3 xy + b_3x + c_3y + d_3)\partial_{xy}u(x,y)  \\
+ (a_2 y^2 + b_2y + c_2)\partial_{yy}u(x,y) + (e_1 x + f_1)\partial_{x}u(x,y) + (e_2 y + f_2)\partial_{y}u(x,y) + \lambda u(x,y) =0,
\end{multline}
where $a_j$, $b_j$, $c_j$, $d_j$ and $\lambda$ are real numbers. The solutions of this equation have the remarkable property that all the partial derivatives of any order of these solutions are also solutions of an equation of the same form.

Moreover, we shall also consider admissible partial differential equations.

\begin{defi}\label{admis}
A second order partial differential equation is admissible if and only if \cite{KS,SUE} for any non--negative integer $n$ there exists a number $\lambda_{n}$ such that equation (\ref{original}) with $\lambda := \lambda_n$ has $n+1$ linearly independent solutions which are polynomials of total degree $n$ and has no non--trivial solutions in the set of polynomials of total degree less than $n$.
\end{defi}

The following characterization has been proved in \cite{LYS2,SUE}.

\begin{propo}\label{admhyp}
Equation (\ref{original}) is an admissible second order partial differential equation of hypergeometric type if and only if it can be written in the form
\begin{align}\label{Lysk1}
&(ax^2 + b_1x + c_1)\partial_{xx}u(x,y) + 2(axy + b_3x + c_3y + d_3)\partial_{xy}u(x,y)  \\
&+ (ay^2 + b_2y + c_2)\partial_{yy}u(x,y) + (ex + f_1)\partial_{x}u(x,y) + (ey + f_2)\partial_{y}u(x,y) + \lambda_n u(x,y) =0,\nonumber
\end{align}
where $\lambda_n= -n((n-1)a+e)$ and the coefficients $a,b_j,c_j,d_3,e,f_j$ are arbitrary fixed real numbers, but the numbers $a$ and $e$ are such that the condition
\begin{equation}\label{NONU}
\varpi_{k} := ak+e \neq 0.
\end{equation}
holds true for any non--negative integer $k$.
\end{propo}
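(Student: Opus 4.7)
My plan is to extract both the reduced form (\ref{Lysk1}) and the non\-degeneracy condition on $\varpi_k$ from a single source: the action of the operator on homogeneous components of a polynomial solution. First I would introduce a grading: write the left-hand side of (\ref{original}), with the $\lambda u$ term removed, as $L = L_0 + L_{-1} + L_{-2}$, where $L_0$ gathers the strictly degree-preserving pieces $a_1 x^2\partial_{xx}$, $2a_3 xy\partial_{xy}$, $a_2 y^2\partial_{yy}$, $e_1 x\partial_x$, $e_2 y\partial_y$, while $L_{-1}$ and $L_{-2}$ collect the coefficients $b_j,c_j,d_3,f_j$ that strictly lower the total degree by $1$ and $2$, respectively.

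Next, I would compute directly that
\[
L_0(x^{n-k}y^k) = \mu_{n,k}\,x^{n-k}y^k,
\]
with $\mu_{n,k}$ an explicit quadratic polynomial in $k$. Admissibility at order $n$ demands an $(n+1)$-dimensional polynomial kernel of $L+\lambda_n I$, so the top-degree component of the equation must vanish on every degree-$n$ homogeneous polynomial; that is, $\mu_{n,k}+\lambda_n=0$ for all $k=0,\dots,n$. Forcing this identity in $k$ for every $n$ gives, by inspecting the coefficients of $k^2$ and $k$, the three relations $a_1-2a_3+a_2=0$, $a_1=a_3$, and $a_1-a_2-e_1+e_2=0$, which collapse to $a_1=a_2=a_3=:a$ and $e_1=e_2=:e$; the constant-in-$k$ part then yields $\lambda_n=-n((n-1)a+e)$. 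This produces the form (\ref{Lysk1}).

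The final step is to identify $\varpi_k\neq 0$ as exactly the hypothesis needed for the remaining admissibility requirements. Under the reduced form, $L_0$ multiplies each degree-$k$ homogeneous polynomial by the scalar $f(k):=k((k-1)a+e)$, so on such polynomials
\[
L_0+\lambda_n I \;=\; (f(k)-f(n))\,I \;=\; (k-n)\,\varpi_{k+n-1}\,I.
\]
For the ``no lower-degree solutions'' clause, if a solution had degree $m<n$, its top homogeneous piece $u_m$ would be a nonzero eigenvector of $L_0$ with eigenvalue $-\lambda_n$, forcing $\varpi_{n+m-1}=0$. Conversely, given an arbitrary degree-$n$ homogeneous leading piece $u_n$, I would recover the lower pieces by solving the triangular cascade
\[
(L_0+\lambda_n I)\,u_{n-j} \;=\; -L_{-1}u_{n-j+1}-L_{-2}u_{n-j+2},\qquad j=1,\dots,n,
\]
which at each level is uniquely solvable since the scalar $(-j)\,\varpi_{2n-j-1}$ is nonzero. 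As $n$ and $j$ vary, the indices $n+m-1$ and $2n-j-1$ sweep out every non-negative integer, so both admissibility clauses for all $n$ together are equivalent to $\varpi_k\neq 0$ for all $k\geq 0$.

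The hard part will be the bookkeeping of the second step: showing that demanding $\mu_{n,k}$ be constant in $k$ for \emph{every} $n$ simultaneously forces the clean reduction $a_1=a_2=a_3$ and $e_1=e_2$, while imposing no restriction whatsoever on the lower-order coefficients $b_j,c_j,d_3,f_j$. Once that top-symbol reduction is in place, the rest is linear algebra on the finite-dimensional spaces of homogeneous polynomials.
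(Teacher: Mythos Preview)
Your argument is correct. The paper, however, does not give its own proof of this proposition: it is introduced by the sentence ``The following characterization has been proved in \cite{LYS2,SUE}'' and is simply quoted from Lyskova and Suetin. So there is no in-paper proof to compare against; your proposal supplies a self-contained argument that those references contain in some form.

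The grading $L=L_0+L_{-1}+L_{-2}$ by total degree and the eigenvalue computation $L_0(x^{n-k}y^k)=\mu_{n,k}\,x^{n-k}y^k$ is exactly the natural approach. Forcing $\mu_{n,k}$ to be independent of $k$ for each $n$ indeed yields $a_1=a_2=a_3$, $e_1=e_2$ and the formula for $\lambda_n$, and the triangular cascade then isolates the role of the $\varpi_k$ cleanly; the factorization $f(k)-f(n)=(k-n)\varpi_{k+n-1}$ is the key identity, and you have it right.

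One small point on the converse. Saying that the indices $n+m-1$ and $2n-j-1$ sweep out all non-negative integers shows that the forward direction \emph{uses} every $\varpi_k$, but it does not by itself prove that admissibility fails whenever some $\varpi_{k_0}=0$. You should exhibit a concrete failure. The quickest choice: take $n=k_0+1$; then $\lambda_n=-n\,\varpi_{n-1}=-(k_0+1)\varpi_{k_0}=0=\lambda_0$, so the constant function $u\equiv 1$ is a nontrivial solution of degree $0<n$ for $\lambda=\lambda_n$, violating the ``no lower-degree solutions'' clause. With this sentence added, your outline is a complete proof.
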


In these conditions an orthogonality weight function defined in certain domain of $\mathbb{R}^2$ which is related with the partial differential equation (\ref{Lysk1}) can be given for their orthogonal polynomial solutions as shown in \cite{SUE}.
\begin{propo}\label{pearsonEDP}
Let $\alpha(x,y)$ be the discriminant of Eq.~(\ref{Lysk1}), i.e.:
\begin{equation}\label{Eq:alpha}
\alpha(x,y)= \left( c_1 + x\,\left( b_1 + a\,x \right) \right) \,    \left( c_2 + y\,\left( b_2 + a\,y \right)\right) -\left( d_3 + b_3\,x + \left( c_3 + a\,x \right) \,y \right)^2,
\end{equation}
and $D\subset \mathbb{R}^2$ be the domain:
\begin{equation}\label{DETNN}
D = \left\{ (x,y) \in \mathbb{R}^2:\,\, \alpha(x,y) \neq 0\right\}\,.
\end{equation}
Defining the two functions \cite[eq. (15), p. 132]{SUE}
\begin{align}\label{FUNCWEI}
\beta(x,y)&= (-b_1 - c_3 + f_1 - 3 a x + e x)(ay^2 + b_2y + c_2)\\
& - (-b_2 - b_3 + f_2 - 3 a y +  e y)(axy + b_3x + c_3y + d_3)\,,\nonumber\\
\gamma(x,y)&= -(c_1 + x (b_1 + a x)) (b_2 + b_3 - f_2 + 3 a y - e y) \label{gggg} \\
&+ (b_1 + c_3 - f_1 + 3 a x - e x) (d_3 + b_3 x + (c_3 + a x) y)\,,\nonumber
\end{align}
assuming that in $D$ the following condition holds true:
\begin{equation}\label{potselfadjoint}
\frac{\partial}{\partial x} \left( \frac{\gamma(x,y)}{\alpha(x,y)} \right) = \frac{\partial}{\partial y} \left( \frac{\beta(x,y)}{\alpha(x,y)} \right)\,,
\end{equation}
considering the weight function or integrating factor of (\ref{Lysk1}) given by \cite[Equation (22), p. 134]{SUE}
\begin{equation}\label{WEIF}
\varrho(x,y)= \text{exp}
\left\{\int_{y_0}^{y}\frac{\gamma(x,y)}{\alpha(x,y)}dy +\int_{x_0}^{x}\left[\left(\frac{\beta(x,y)}{\alpha(x,y)}\right)_{y=y_0}\right]dx \right\},
\end{equation}
which determines (up to a multiplicative constant) the functional
\begin{equation}\label{FUNCTIONAL}
\mathcal{L}(P)=\iint_{D} P(x,y)\varrho(x,y)\,dx\,dy\,,\quad P \in \Pi_n^2\,,
\end{equation}
defined in the set $\Pi_n^2$ provided that all such integrals exist, then, there exists a unique monic vector polynomial family $\{\widehat{\mathbb{P}}_n\}_{n \geq 0}$ solution of~(\ref{Lysk1}) and orthogonal with respect to $\varrho$ in $D$, i.e. satisfying
\begin{equation}\label{ortorho}
\iint_{D} \mathbf{x}^m\,\, \widehat{\mathbb{P}}_n^{\,\,T} \varrho(x,y)\,dx\,dy\,=\,
\left\{\begin{array}{l}
0 \in \mathcal{M}^{(m+1,n+1)} \,,\,\, \mbox{if $n > m$,}\\
\phantom{blanco}\\
H_n \in \mathcal{M}^{(n+1,n+1)} \,,\,\, \mbox{if $m=n$,}
\end{array}\right.
\end{equation}
where $H_n$ $($of size $(n+1)\times (n+1)$$)$ is nonsingular.
\end{propo}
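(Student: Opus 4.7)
The plan is to derive $\varrho$ from a Pearson-type gradient system that puts (\ref{Lysk1}) in formally self-adjoint (divergence) form, then construct the monic vector family $\{\widehat{\mathbb{P}}_n\}$ using admissibility, and finally verify orthogonality via a Green's identity. Writing $A_{11}=ax^2+b_1x+c_1$, $A_{12}=axy+b_3x+c_3y+d_3$ and $A_{22}=ay^2+b_2y+c_2$, I look for $\varrho>0$ on $D$ such that $\varrho$ times (\ref{Lysk1}) reads
\[
\partial_x\bigl[\varrho(A_{11}u_x+A_{12}u_y)\bigr]+\partial_y\bigl[\varrho(A_{12}u_x+A_{22}u_y)\bigr]+\lambda_n\varrho\,u=0.
\]
Expanding and matching the coefficients of $u_x$ and $u_y$ with those of (\ref{Lysk1}) yields the $2\times 2$ linear system
\[
\begin{pmatrix}A_{11} & A_{12}\\ A_{12} & A_{22}\end{pmatrix}\begin{pmatrix}(\ln\varrho)_x\\ (\ln\varrho)_y\end{pmatrix}=\begin{pmatrix}(e-3a)x+f_1-b_1-c_3\\ (e-3a)y+f_2-b_2-b_3\end{pmatrix}.
\]
Its determinant is precisely $\alpha(x,y)$ from (\ref{Eq:alpha}); applying Cramer's rule on $D$ and expanding gives $(\ln\varrho)_x=\beta(x,y)/\alpha(x,y)$ and $(\ln\varrho)_y=\gamma(x,y)/\alpha(x,y)$, where $\beta,\gamma$ match (\ref{FUNCWEI})--(\ref{gggg}) identically. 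The compatibility condition of this overdetermined gradient system is exactly (\ref{potselfadjoint}), and under it integration along the broken path $(x_0,y_0)\to(x,y_0)\to(x,y)$ produces the explicit integrating factor (\ref{WEIF}), unique up to a positive multiplicative constant.

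For the existence and uniqueness of $\{\widehat{\mathbb{P}}_n\}_{n\geq 0}$ I would invoke Proposition \ref{admhyp}. For each $n\geq 0$ the $\lambda_n$-eigenspace $V_n$ within polynomials of total degree $\leq n$ has dimension $n+1$, and by admissibility every nonzero element of $V_n$ has exact degree $n$. Consider the linear map $V_n\to\mathcal{H}_n$ sending $p$ to its homogeneous component of degree $n$, where $\mathcal{H}_n$ is the $(n+1)$-dimensional space of binary forms of degree $n$; its kernel consists of polynomial solutions of degree $<n$, hence is trivial, so by dimension count it is an isomorphism. For every monomial $x^{n-k}y^k$ this isomorphism yields a unique polynomial $\widehat{P}_{n-k,k}^n\in V_n$ with that leading part, and assembling these produces the unique monic vector solution $\widehat{\mathbb{P}}_n$ of (\ref{Lysk1}).

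Finally, for the orthogonality relations (\ref{ortorho}) I would apply Green's identity to the divergence form above. Given two eigenfunctions $u_n,u_m$ with $n\neq m$, multiplying one equation by the other, subtracting and integrating over $D$ makes the second-order terms combine into a boundary flux across $\partial D$, which vanishes because $\varrho$ degenerates at $\{\alpha=0\}$ in the manner analysed in \cite{SUE}; what remains is
\[
(\lambda_n-\lambda_m)\iint_D u_n u_m\,\varrho\,dx\,dy=0.
\]
Since $\lambda_n-\lambda_m=-(n-m)\,\varpi_{n+m-1}\neq 0$ for $n\neq m$ by (\ref{NONU}), the integral vanishes, proving the first identity in (\ref{ortorho}). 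For the second, orthogonality to all polynomials of lower degree lets me replace $\mathbf{x}^n$ by $\widehat{\mathbb{P}}_n$ inside $\mathcal{L}[\mathbf{x}^n\widehat{\mathbb{P}}_n^T]$, so $H_n=\iint_D\widehat{\mathbb{P}}_n\widehat{\mathbb{P}}_n^T\varrho\,dx\,dy$ is the Gram matrix of linearly independent entries against a positive weight, hence nonsingular. The main obstacle is the control of the boundary flux—tied to the fact that $D=\{\alpha\neq 0\}$ is exactly where the weighted Gauss theorem applies cleanly—a point already handled in \cite{SUE}; the algebraic verification that Cramer's numerators coincide with $\beta$ and $\gamma$ is tedious but routine.
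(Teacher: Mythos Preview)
The paper does not actually supply a proof of this proposition: it is stated as a known result, with the construction of $\beta$, $\gamma$, the compatibility condition (\ref{potselfadjoint}), and the weight (\ref{WEIF}) all attributed directly to Suetin \cite{SUE}, and the existence of orthogonal polynomial solutions likewise introduced with the phrase ``as shown in \cite{SUE}''. So there is no in-paper argument to compare against; your sketch is in fact a faithful reconstruction of the standard route in \cite{SUE}.

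On the substance, your derivation is correct. Writing the operator in divergence form and matching the first-order coefficients does yield exactly the $2\times 2$ system you display, with determinant $\alpha=A_{11}A_{22}-A_{12}^2$ as in (\ref{Eq:alpha}); the Cramer numerators are verbatim the $\beta$ and $\gamma$ of (\ref{FUNCWEI})--(\ref{gggg}), so that step is not merely ``routine'' but literal. The compatibility condition for $\nabla(\ln\varrho)$ is precisely (\ref{potselfadjoint}), and the broken-line integral gives (\ref{WEIF}). Your use of admissibility (Proposition~\ref{admhyp}) to produce the unique monic family via the leading-form isomorphism $V_n\to\mathcal{H}_n$ is clean and correct, as is the eigenvalue computation $\lambda_n-\lambda_m=-(n-m)\varpi_{n+m-1}$ together with (\ref{NONU}). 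The only point you rightly flag as delicate---the vanishing of the boundary flux in the Green identity, which hinges on the behaviour of $\varrho$ along $\{\alpha=0\}$---is exactly the analytic content that both you and the paper delegate to \cite{SUE}.
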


\begin{rem}
From the partial differential equation (\ref{Lysk1}) it is possible to introduce the linear operator ${\mathcal{D}}$ by
\begin{multline}\label{operatororiginal}
\mathcal{D}u=(a x^2 + b_1x + c_1)\partial_{xx}u + 2(a xy + b_3x + c_3y + d_3)\partial_{xy}u  \\
+ (a y^2 + b_2y + c_2)\partial_{yy}u + (e x + f_1)\partial_{x}u + (e y + f_2)\partial_{y}u,
\end{multline}
which is potentially self--adjoint if and only if (\ref{potselfadjoint}) holds true \cite[Theorem 1, p. 133]{SUE}.
\end{rem}

So, as it has been mentioned in the introduction, in this paper we shall be concerned with bivariate orthogonal polynomial families $\{{\mathbb{P}}_n\}_{n \geq 0}$ (in the sense of Definition~\ref{ortho}) written in vector representation (\ref{defPn})--(\ref{EXPP}), which are solutions of admisible potentially self--adjoint second order partial differential equations of hypergeometric type  characteri\-zed in the Proposition~\ref{admhyp}.

\section{Weight functions and Rodrigues formula for the polynomials and for their partial derivatives}\label{Sec:rodrigues}

By differentiating (\ref{Lysk1}) $(r+s)$ times, it turns out that
\[
z^{(r,s)}(x,y)=\frac{\partial^{r+s}u}{\partial x^{r}\partial y^{s}}(x,y), \quad r,s=0,1,2,\dots,
\]
satisfies an admissible second order partial differential equation of hypergeometric type
\begin{multline}\label{derLysk1}
(ax^2 + b_1x + c_1)\partial_{xx}z^{(r,s)}(x,y) + 2(axy + b_3x + c_3y + d_3)\partial_{xy}z^{(r,s)}(x,y)  \\
+ (ay^2 + b_2y + c_2)\partial_{yy}z^{(r,s)}(x,y) + \tau_x^{(r,s)}(x)\partial_{x}z^{(r,s)}(x,y) \\
+ \tau_y^{(r,s)}(y)\partial_{y}z^{(r,s)}(x,y) + \mu_{r+s} z^{(r,s)}(x,y) =0,
\end{multline}
where
\begin{align}
\tau_x^{(r,s)}(x)&= (e+2a(r+s))x + f_1+ r b_1+2 s c_3,\\
\tau_y^{(r,s)}(y)&= (e+2a(r+s))y + f_2+ 2rb_3 +sb_2, \\
\mu_{r+s}&= \lambda_n + (r+s) e + (r+s)(r+s-1)a,
\end{align}
and $a,b_j,c_j,d_3,e,f_j$ are arbitrary fixed real numbers
satisfying conditions (\ref{NONU}). Observe that equation
(\ref{derLysk1}) and the above relations have been obtained in the
multivariate case for hypergeometric type equations (not necessarily
admissible) in \cite[Theorem 1]{LYS2}.

Let us also introduce the linear operator
\begin{multline}\label{operdrs}
{\mathcal{D}}^{(r,s)} z=(ax^2 + b_1x + c_1)\partial_{xx}z+ 2(axy + b_3x + c_3y + d_3)\partial_{xy}z  \\
+ (ay^2 + b_2y + c_2)\partial_{yy}z + \tau_x^{(r,s)}(x)\partial_{x}z + \tau_y^{(r,s)}(y)\partial_{y}z.
\end{multline}

Applying condition (\ref{potselfadjoint}) to the operator
(\ref{operdrs}), we obtain that this operator is potentially
self--adjoint in a domain $D$, if and only if
\begin{equation}\label{PSADJ}
r \frac{\partial}{\partial x}\left(\frac{\omega(x,y)}{\alpha(x,y)}\right)+(s-r)
\frac{\partial}{\partial x}\left(\frac{1}{\alpha(x,y)}\frac{\partial \alpha(x,y)}{\partial y}\right)
-s\frac{\partial}{\partial y}\left(\frac{\theta(x,y)}{\alpha(x,y)}\right)=0,
\end{equation}
where
\begin{align}
\alpha(x,y)&= AC-B^2,\\
\omega(x,y)&=2A \frac{\partial B}{\partial x}-B\frac{\partial A}{\partial x}, \\
\theta(x,y)&= 2C \frac{\partial B}{\partial y}-B\frac{\partial
C}{\partial y},
\end{align}
and
\begin{align}
A=A(x,y)&= ax^2 + b_1x + c_1,\\
B=B(x,y)&=axy + b_3x + c_3y + d_3, \\
C=C(x,y)&= ay^2 + b_2y + c_2.
\end{align}

If the operator ${\mathcal{D}}^{(r,s)}$ defined in (\ref{operdrs}) is potentially self--adjoint in a domain $D$, there exists in this domain a positive and twice continuously differentiable function $\varrho^{(r,s)}(x,y)$ which is the solution of the system of differential equations (Pearson type equations) \cite[equations (7) and (8)]{LYS2} and \cite[p. 132]{SUE}
\begin{equation}\label{Peatypeq}
\left \{
\begin{matrix}
\displaystyle{\frac{1}{\varrho^{(r,s)}(x,y)}\frac{\partial \varrho^{(r,s)}(x,y)}{\partial x}= \frac{\beta^{(r,s)}(x,y)}{\alpha(x,y)}},\\
\displaystyle{\frac{1}{\varrho^{(r,s)}(x,y)}\frac{\partial \varrho^{(r,s)}(x,y)}{\partial y}=\frac{\gamma^{(r,s)}(x,y)}{\alpha(x,y)}},
\end{matrix}
\right.
\end{equation}
where
\begin{align}
\beta^{(r,s)}(x,y)&=\beta(x,y)+r \frac{\partial \alpha}{\partial x}(x,y)+s \theta(x,y),\\
\gamma^{(r,s)}(x,y)&=\gamma(x,y)+r \omega(x,y)+s \frac{\partial \alpha}{\partial y}(x,y),
\end{align}
and the polynomials $\beta(x,y)$ and $\gamma(x,y)$ have been defined in (\ref{FUNCWEI}) and (\ref{gggg}) respectively.

From the Pearson type equations (\ref{Peatypeq}) we obtain
\begin{equation}\label{pesoderiv}
\varrho^{(r,s)}(x,y)=exp \left\{\int_{y_0}^{y}\frac{\gamma^{(r,s)}(x,y)}{\alpha(x,y)} dy +\int_{x_0}^{x}\left[\left(\frac{\beta^{(r,s)}(x,y)}{\alpha(x,y)}\right)_{y=y_0}\right]dx
\right\},
\end{equation}
up to a multiplicative constant.

Now, we are in a position to establish the relation linking $\varrho^{(r,s)}(x,y)$ and $\varrho(x,y)$.

\subsection{Relation between weight functions}\label{Sec:3}

Now, we can establish the connection between $\varrho^{(r,s)}(x,y)$ and $\varrho^{(0,0)}(x,y) \equiv \varrho(x,y)$, given in (\ref{pesoderiv}) and (\ref{WEIF}) respectively. From equation (\ref{pesoderiv}), after straightforward computations, we obtain that
\begin{equation}\label{phirs}
\varrho^{(r,s)}(x,y)=\phi^{(r,s)}(x,y) \varrho(x,y), \qquad r,s=0,1,2,\dots,
\end{equation}
up to a multiplicative constant, where $\phi^{(r,s)}(x,y)$ is a polynomial which explicit expression depends on the coefficients of the partial differential equation (\ref{Lysk1}). After solving the non linear system of equations (\ref{PSADJ}) for any $r$ and $s$, we can reduce the solutions of the system to the following ten cases:
\begin{enumerate}[(i)]

\item If $b_1=2 c_3$ and $b_2=2 b_3$, we have
\[
\phi^{(r,s)}(x,y)=[\alpha(x,y)]^{r+s},
\]
where
\[
\alpha(x,y)=-(d_3+b_3 x + (c_3+a x) y)^{2} + (c_1 + x (2 c_3 + a x))(c_2 + y(2 b_3 + a y)).
\]

\item If $c_3 \neq 0$, $d_3 \neq 0$, $b_3 \neq 0$, $a=\frac{b_3 c_3}{d_3}$, $c_1=\frac{(b_1-c_3)d_3}{b_3}$, and $c_2=\frac{(b_2-b_3)d_3}{c_3}$, we have
\[
\phi^{(r,s)}(x,y)=\frac{[\alpha(x,y)]^{r+s}}{(d_3+c_3 y)^{r}(d_3 + b_3 x)^{s}},
\]
where
\begin{multline*}
\alpha(x,y)=-\frac{1}{b_3 c_3 d_3} \\
\times ((d_3 + b_3 x)(d_3 + c_3 y)(-b_1 (b_2 d_3 - b_3 d_3 + b_3 c_3 y) + c_3 (b_2 (d_3 - b_3 x) + 2 b_3 (b_3 x + c_3 y)))).
\end{multline*}

\item If $a=b_1=c_1=c_3=0$, we obtain
\[
\phi^{(r,s)}(x,y)=[\alpha(x,y)]^{r}, \qquad \alpha(x,y)=(d_3+b_3 x)^{2}.
\]

\item If $a=b_2=b_3=c_2=0$, we have
\[
\phi^{(r,s)}(x,y)=[\alpha(x,y)]^{s}, \qquad \alpha(x,y)=(d_3+c_3 y)^{2}.
\]

\item If $a=b_3=c_3=d_3=0$, we have
\[
\phi^{(r,s)}(x,y)=\frac{[\alpha(x,y)]^{r+s}}{(c_1+b_1 x)^{s} (c_2+b_2 y)^{r}} , \qquad \alpha(x,y)=(c_1+b_1 x)(c_2+b_2 y).
\]

\item If $a \neq 0$, $b_3=c_2=d_3=0$, and $c_1=\frac{(b_1-c_3)c_3}{a}$, we obtain
\[
\phi^{(r,s)}(x,y)=\frac{[\alpha(x,y)]^{r+s}}{y^{r} (c_3+ax)^{s}},
\]
where
\[
\alpha(x,y)=\frac{(c_3+ax) y (b_2 (b_1-c_3+ax)+a(b_1-2 c_3)y)}{a}.
\]

\item  If $c_3 \neq 0$, $a=b_3=0$, $b_1=c_3$, and $c_2=\frac{b_2 d_3}{c_3}$, we obtain
\[
\phi^{(r,s)}(x,y)=\frac{[\alpha(x,y)]^{r+s}}{(d_3+c_3y)^{r}},
\]
where
\[
\alpha(x,y)=\frac{(d_3+c_3 y)(b_2(c_1+c_3 x)-c_3 (d_3+c_3 y))}{c_3}.
\]

\item  If $b_3 \neq 0$, $a=c_3=0$, $b_2=b_3$, and $c_1=b_1 d_3/b_3$, we obtain
\[
\phi^{(r,s)}(x,y)=\frac{[\alpha(x,y)]^{r+s}}{(d_3+b_3 x)^{s}},
\]
where
\[
\alpha(x,y)=\frac{(d_3+b_3 x)(-b_3(d_3+b_3)x+b_1(c_2+b_3 y))}{b_3}.
\]

\item  If $a \neq 0$, $c_1=c_3=d_3=0$, and $c_2=\frac{(b_2-b_3)b_3}{a}$, we obtain
\[
\phi^{(r,s)}(x,y)=\frac{[\alpha(x,y)]^{r+s}}{x^{s} (b_3+a y)^{r}},
\]
where
\[
\alpha(x,y)=\frac{x(b_3+ay)(a(b_2-2 b_3)x+b_1(b_2-b_3+a y))}{a}.
\]

\item  If $c_1=c_2=d_3=b_3=c_3=0$, we obtain
\[
\phi^{(r,s)}(x,y)=\frac{[\alpha(x,y)]^{r+s}}{x^{s} y^{r}},
\]
where
\[
\alpha(x,y)=x y (a b_2 x+b_1(b_2+ay)).
\]

\end{enumerate}

\begin{rem}
Observe that in all the above cases the polynomial $\phi^{(r,s)}(x,y)$ defined in (\ref{phirs}) can be factorized as
\begin{equation}\label{Eq:phijjj}
\phi^{(r,s)}(x,y) = [\phi^{(1,0)}(x,y)]^{r} [\phi^{(0,1)}(x,y)]^{s},
\end{equation}
for any $r$ and $s$.
\end{rem}

\begin{rem}
It is important to notice here that different cases could give rise to the same $\phi^{(r,s)}(x,y)$ associated with the same partial differential equation. This situation appears in the example studied in detail in Section \ref{Sec:6} where the coefficients of the partial differential equation (\ref{equationappell}) fulfill the conditions of cases (vi), (ix) and (x).
\end{rem}

\begin{rem}
We must now mention that in \cite[Theorem 3]{LYS2}, Lyskova presented the equation (\ref{phirs}) in a non--explicit form.
\end{rem}

\subsection{Rodrigues formula} One of the main problems in the theory of orthogonal polynomials in several variables is to
obtain explicit expressions for the orthogonal polynomial solutions of the partial differential equation. In this direction
we could mention the works of Engelis \cite{Engelis}, who derived the Rodrigues formula for some classes of orthogonal
polynomials in two variables, and Suetin \cite[Theorem 3, p. 151]{SUE}, who showed that this Rodrigues
representation is one of the ways of constructing explicitly orthogonal polynomial families in the the potentially self--adjoint case.

As a consequence, in the context considered here (admissible
potentially self--adjoint partial differential equations of
hypergeometric type), we have the following explicit Rodrigues
formula for the polynomial solutions of (\ref{Lysk1}) of total
degree $n+m$
\begin{equation}\label{Eq:Rodrigues}
P_{n,m}(x,y)=\frac{\aleph_{n,m}}{\varrho(x,y)} \frac{\partial^{n+m}}{\partial x^{n} \partial y^{m}} \left[ \varrho(x,y) \left[ \phi^{(1,0)}(x,y) \right]^{n} \left[ \phi^{(0,1)}(x,y) \right]^{m} \right] \,,
\end{equation}
where $\aleph_{n,m}$ is a normalizing constant and the polynomials $\phi^{(r,s)}(x,y)$ have been introduced in (\ref{phirs}) and (\ref{Eq:phijjj}).

Having in mind that the partial differential equation for the partial derivatives  (\ref{derLysk1}) is also of hypergeometric type with the same discriminant $\alpha(x,y)$ defined in (\ref{Eq:alpha}), we also have a Rodrigues representation for the partial derivatives of any order, polynomial solutions of (\ref{derLysk1}), given by
\begin{equation}\label{Eq:Rodrigues-derivatives}
P_{n,m}^{(r,s)}(x,y)=\frac{\aleph_{n,m,r,s}}{\varrho^{(r,s)}(x,y)} \frac{\partial^{n+m-r-s}}{\partial x^{n-r} \partial y^{m-s}} \left[ \varrho^{(r,s)}(x,y) \left[ \phi^{(1,0)}(x,y) \right]^{n-r} \left[ \phi^{(0,1)}(x,y) \right]^{m-s} \right] \,,
\end{equation}
where $\varrho^{(r,s)}(x,y)$ is given in (\ref{pesoderiv})  and
$\aleph_{n,m,r,s}$ is a normalizing constant. In this way we have
obtained a natural extension to the bivariate case of the Rodrigues
representation for classical orthogonal polynomials in one variable
\cite{NSU}.

\section{Explicit expressions for algebraic properties}\label{Sec:4}

Let us consider a vector polynomial family $\{\mathbb{P}_n\}_{n\in\mathbb{N}_0}$ solution of~(\ref{Lysk1}) orthogonal in the sense of Proposition~\ref{pearsonEDP}, i.e. it is orthogonal with respect to a weight~(\ref{WEIF}) and satisfies~(\ref{ortorho}) in an appropriate domain
$D\in\mathbb{R}^2$. In such a conditions it can be proved that the family satisfy a number of algebraic and differential properties. Here we focus our attention in three of the most relevant: the three--term recurrence relations, the structure relations and the derivative representations. As already mentioned, our aim in this section is to give all of these relations in terms of the matrix coefficients in the expansions~(\ref{EXPP}) of the $\{\mathbb{P}_n\}$--family elements.

For, let us first introduce the matrices $L_{n,j}$ of size $(n+1)
\times (n+2)$ defined by
\begin{equation}\label{DEFLM}
L_{n,1}\textbf{x}^{n+1}=x\, \textbf{x}^n, \quad L_{n,2}\textbf{x}^{n+1}=y\, \textbf{x}^n,
\end{equation}
so that,
\begin{equation}\label{defLL}
\begin{array}{ll} L_{n,1}=\begin{pmatrix}
1 & & \text{\circle{10}}&0
\\&\ddots &&\vdots  \\&\text{\circle{10}}&1&0 \end{pmatrix}  &
\text{and} \quad
L_{n,2}=\begin{pmatrix} 0 & 1&
&\text{\circle{10}}
\\\vdots &  &\ddots &  \\0&\text{\circle{10}}&&1 \end{pmatrix}
\end{array}.
\end{equation}

Let us observe that
\begin{gather}
x^2\, \textbf{x}^n =L_{n,1}L_{n+1,1}\textbf{x}^{n+2}, \quad y^2\, \textbf{x}^n =L_{n,2}L_{n+1,2}\textbf{x}^{n+2}\,, \label{LLLL} \\
L_{n,2} L_{n+1,1} = L_{n,1}L_{n+1,2}, \nonumber
\end{gather}
and for $j=1,2$,
\begin{equation}
L_{n,j}\,L_{n,j}^{\text{T}} = I_{n+1},
\end{equation}
where $I_{n+1}$ denotes the identity matrix of size $n+1$.

Moreover, for $n \geq 1$,
\begin{equation}\label{DERMON}
\left \{
\begin{matrix}
\displaystyle{\partial_{x}\textbf{x}^n=\mathbb{E}_{n,1}\,\textbf{x}^{n-1}},\\
\displaystyle{\partial_{y}\textbf{x}^n=\mathbb{E}_{n,2}\,\textbf{x}^{n-1}},
\end{matrix}
\right.
\end{equation}
where the matrices $\mathbb{E}_{n,j}$ of size $(n+1) \times n$ are given by
\begin{equation}\label{DIAGDIF}
\begin{array}{rr} \mathbb{E}_{n,1}=\begin{pmatrix}
0 & n & & &\text{\circle{10}} \\
0 &  &n-1& &  \\
\vdots & &  &\ddots &  \\
0 & &\text{\circle{10}}& & 1 \\
0 & 0 &\dots&0&0\end{pmatrix}  & \text{and} \quad
\mathbb{E}_{n,2}=\begin{pmatrix} 0&\dots& &0 \\1 & &
&\text{\circle{10}}
\\ & 2 & &  \\& &\ddots &  \\&\text{\circle{10}}& &n
\end{pmatrix}\,.
\end{array}
\end{equation}

\subsection{The three--term recurrence relations}
The existence of a recurrence relation for a vector orthogonal polynomial family can be established in more general settings than those considered here. More precisely, the following existence theorem is proved in  \cite{DUXU}.

\begin{teor}
Let $\mathcal{L}$ be the positive definite moment functional as defined in (\ref{FUNCTIONAL0}) and $\{\mathbb{P}_{n}\}_{n \geq 0}$ be an orthogonal family with respect to $\mathcal{L}$. Then, for $n \geq 0$, there exist unique matrices $A_{n,j}$ of size $(n+1) \times (n+2)$, $B_{n,j}$ of size $(n+1) \times (n+1)$,and $C_{n,j}$ of size $(n+1) \times n$, such that
\begin{equation}\label{RRTT}
x_j\mathbb{P}_n=A_{n,j}\mathbb{P}_{n+1} + B_{n,j}\mathbb{P}_{n} + C_{n,j}\mathbb{P}_{n-1}, \quad j =1, 2
\end{equation}
with the initial conditions $\mathbb{P}_{-1}=0$ and $\mathbb{P}_{0}=1$. Here the notation $x_{1}=x$, $x_{2}=y$ is used.
\end{teor}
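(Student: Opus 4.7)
The plan is to adapt the classical univariate argument to the matrix-vector setting. First I would observe that each entry of the column vector $x_j\,\mathbb{P}_n$ is a polynomial of total degree at most $n+1$. Since the leading matrix coefficient $G_{k,k}$ in the expansion (\ref{EXPP}) is nonsingular for every $k$, the polynomial entries of $\mathbb{P}_0, \mathbb{P}_1, \ldots, \mathbb{P}_{n+1}$ together form a basis of $\Pi_{n+1}^2$, arranged degree by degree. Consequently there exist matrices $D_{n,j,k}$ of size $(n+1)\times(k+1)$, for $0 \leq k \leq n+1$, such that
\[
x_j\,\mathbb{P}_n \;=\; \sum_{k=0}^{n+1} D_{n,j,k}\,\mathbb{P}_k.
\]

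The second step exploits orthogonality to kill the low-degree coefficients. Right-multiplying the expansion by $\mathbb{P}_m^T$ and applying the functional $\mathcal{L}$ yields
\[
\mathcal{L}\!\left[x_j\,\mathbb{P}_n\,\mathbb{P}_m^T\right] \;=\; D_{n,j,m}\,K_m, \qquad K_m := \mathcal{L}\!\left[\mathbb{P}_m\,\mathbb{P}_m^T\right],
\]
since (\ref{ortog1})--(\ref{ortog2}), together with the expansion of $\mathbb{P}_k$ in the monomial vectors $\mathbf{x}^0,\ldots,\mathbf{x}^k$, force $\mathcal{L}[\mathbb{P}_k\,\mathbb{P}_m^T]=0$ for $k \neq m$ and make $K_m = G_{m,m}\,H_m$ nonsingular. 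Moreover, for $m \leq n-2$ the entries of $x_j\,\mathbb{P}_m$ have total degree at most $m+1 \leq n-1$, so orthogonality of $\mathbb{P}_n$ against every monomial of degree strictly less than $n$ gives $\mathcal{L}[x_j\,\mathbb{P}_n\,\mathbb{P}_m^T]=0$ and hence $D_{n,j,m}=0$. Only three blocks survive, and I would set $A_{n,j}:=D_{n,j,n+1}$, $B_{n,j}:=D_{n,j,n}$, $C_{n,j}:=D_{n,j,n-1}$; the prescribed sizes match the construction of the $D_{n,j,k}$ automatically. Uniqueness then follows from the nonsingularity of $K_{n-1}$, $K_n$, $K_{n+1}$ via the closed forms
\[
A_{n,j} = \mathcal{L}[x_j\,\mathbb{P}_n\,\mathbb{P}_{n+1}^T]\,K_{n+1}^{-1},\quad B_{n,j} = \mathcal{L}[x_j\,\mathbb{P}_n\,\mathbb{P}_n^T]\,K_n^{-1},\quad C_{n,j} = \mathcal{L}[x_j\,\mathbb{P}_n\,\mathbb{P}_{n-1}^T]\,K_{n-1}^{-1}.
\]
The convention $\mathbb{P}_{-1}=0$ takes care of the initial cases $n=0,1$ uniformly.

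I do not foresee a deep obstacle: the argument is essentially bookkeeping of matrix sizes plus two applications of orthogonality. The one point requiring genuine care is the opening claim that the polynomial entries of $\mathbb{P}_0,\ldots,\mathbb{P}_{n+1}$ really do form a basis of $\Pi_{n+1}^2$, organised so that the \emph{degree-$k$ block} of the expansion of $x_j\,\mathbb{P}_n$ is exactly an $(n+1)\times(k+1)$ matrix multiplying $\mathbb{P}_k$ (and not some mixture of $\mathbb{P}_k$ with the lower $\mathbb{P}_\ell$). This is where the graded lexicographical ordering is essential: it follows inductively from the nonsingularity of each $G_{k,k}$ in (\ref{EXPP}), since then $\mathbb{P}_k$ and $\mathbf{x}^k$ are related by an invertible change of basis modulo polynomials of lower total degree. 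Once this basis fact is pinned down, the rest of the proof is mechanical.
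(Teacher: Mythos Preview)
Your argument is correct and is precisely the standard proof: expand $x_j\mathbb{P}_n$ in the graded basis $\{\mathbb{P}_k\}$, then use orthogonality together with the invertibility of each $K_m$ to eliminate all blocks with $m\le n-2$ and to pin down the surviving three uniquely. The paper itself does not supply a proof of this theorem; it simply cites \cite{DUXU}, where the argument is essentially the one you have written.
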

Now it is possible to generalize the well--known expressions (\ref{ttrrcof1v}) for the one variable case to the bivariate case. This is done
in the following proposition which is proved with the help of the auxiliary matrices $L_{n,j}$ defined in~(\ref{DEFLM})--(\ref{defLL}).

\begin{teor}\label{TTRRenG}
The explicit expressions of the matrices $A_{n,j}$, $B_{n,j}$ and $C_{n,j}$ ($j =1, 2$) appearing in (\ref{RRTT}) in terms of the values of the leading coefficients $G_{n,n}$, $G_{n,n-1}$ and $G_{n,n-2}$ in the expansions~(\ref{EXPP}) are given by
\begin{equation}\label{COEFTTR}
\left \{
\begin{matrix}
A_{n,j}=G_{n,n}L_{n,j}G_{n+1,n+1}^{-1}, \quad n \geq 0,\\
B_{0,j}=-A_{0,j}G_{1,0},  \\
B_{n,j}=(G_{n,n-1}L_{n-1,j}-A_{n,j}G_{n+1,n})G_{n,n}^{-1}, \quad n \geq 1,  \\
C_{1,j}=-(A_{1,j}G_{2,0}+B_{1,j}G_{1,0}),  \\
C_{n,j}=(G_{n,n-2}L_{n-2,j}-A_{n,j}G_{n+1,n-1}-B_{n,j}G_{n,n-1})G_{n-1,n-1}^{-1}, \quad n \geq 2\,.
\end{matrix}
\right.
\end{equation}
\end{teor}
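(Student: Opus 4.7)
The plan is a direct method-of-undetermined-coefficients argument in the monomial basis. I would substitute the expansion (\ref{EXPP}) for each of $\mathbb{P}_{n+1}$, $\mathbb{P}_n$, $\mathbb{P}_{n-1}$ into both sides of the three-term recurrence (\ref{RRTT}), use the identity $x_j\mathbf{x}^k = L_{k,j}\mathbf{x}^{k+1}$ (which is exactly the defining property (\ref{DEFLM}) of the matrices $L_{k,j}$) to rewrite the left-hand side entirely in the monomial basis, and then equate the matrix coefficients of the linearly independent blocks $\mathbf{x}^{n+1}$, $\mathbf{x}^{n}$, $\mathbf{x}^{n-1}$.

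Concretely, the left-hand side becomes
\[
x_j\mathbb{P}_n \;=\; \sum_{k=0}^{n} G_{n,k}L_{k,j}\mathbf{x}^{k+1} \;=\; G_{n,n}L_{n,j}\mathbf{x}^{n+1} + G_{n,n-1}L_{n-1,j}\mathbf{x}^{n} + G_{n,n-2}L_{n-2,j}\mathbf{x}^{n-1} + \cdots,
\]
while the right-hand side expands as $A_{n,j}\sum_k G_{n+1,k}\mathbf{x}^k + B_{n,j}\sum_k G_{n,k}\mathbf{x}^k + C_{n,j}\sum_k G_{n-1,k}\mathbf{x}^k$. Comparing the $\mathbf{x}^{n+1}$ block gives $G_{n,n}L_{n,j}=A_{n,j}G_{n+1,n+1}$, and since $G_{n+1,n+1}$ is nonsingular by the standing hypothesis on the expansion (\ref{EXPP}), right-multiplying by $G_{n+1,n+1}^{-1}$ yields the first line of (\ref{COEFTTR}). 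Matching the $\mathbf{x}^{n}$ block gives $G_{n,n-1}L_{n-1,j}=A_{n,j}G_{n+1,n}+B_{n,j}G_{n,n}$, and inverting $G_{n,n}$ produces the formula for $B_{n,j}$ valid for $n\ge 1$. Similarly, the $\mathbf{x}^{n-1}$ block gives $G_{n,n-2}L_{n-2,j}=A_{n,j}G_{n+1,n-1}+B_{n,j}G_{n,n-1}+C_{n,j}G_{n-1,n-1}$, from which the formula for $C_{n,j}$ follows for $n\ge 2$.

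The low-index formulas require a separate treatment using the initial conditions $\mathbb{P}_{-1}=0$ and $\mathbb{P}_0=1$ (so $G_{0,0}=1$). For $n=0$, the identity $x_j\mathbb{P}_0 = A_{0,j}\mathbb{P}_1+B_{0,j}\mathbb{P}_0$ has no $\mathbf{x}^{n-1}$ block on either side, and matching the constant term gives $0=A_{0,j}G_{1,0}+B_{0,j}$, which is the second line of (\ref{COEFTTR}). For $n=1$, matching constant terms in $x_j\mathbb{P}_1 = A_{1,j}\mathbb{P}_2+B_{1,j}\mathbb{P}_1+C_{1,j}\mathbb{P}_0$ gives $0=A_{1,j}G_{2,0}+B_{1,j}G_{1,0}+C_{1,j}$, which is the fourth line.

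I do not anticipate a serious obstacle: the proof is essentially careful bookkeeping at three consecutive levels of the graded monomial expansion. The two points requiring attention are the legitimacy of inverting $G_{n,n}$ and $G_{n+1,n+1}$ (guaranteed by the nonsingularity assumption built into (\ref{EXPP})) and the correct handling of the boundary indices $n=0,1$ using the stated initial conditions; the uniqueness of the $A_{n,j}$, $B_{n,j}$, $C_{n,j}$ (from the cited theorem) ensures that the expressions so obtained are the only possible ones.
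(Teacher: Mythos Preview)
Your proposal is correct and follows essentially the same approach as the paper: substitute the expansion (\ref{EXPP}) into (\ref{RRTT}), use (\ref{DEFLM}) to rewrite $x_j\mathbf{x}^k$, and equate the matrix coefficients of the three highest monomial blocks to solve successively for $A_{n,j}$, $B_{n,j}$, $C_{n,j}$. The paper's own proof is stated in a single sentence to this effect (it writes ``equate the coefficients of $\mathbf{x}^k$ for $k=n$, $n-1$, $n-2$,'' which should read $k=n+1$, $n$, $n-1$ as you have it), and your additional care with the boundary cases $n=0,1$ simply makes explicit what the paper leaves implicit.
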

\begin{proof} In equation (\ref{RRTT}), substitute $\mathbb{P}_n$ as given in (\ref{EXPP}), equate the coefficients of $\mathbf{x}^k$ for $k=n$, $n-1$, $n-2$ and solve the corresponding linear system.
\end{proof}

The above result is valid for any orthogonal polynomial sequence
(\ref{ortorho}). From now on we shall consider algebraic properties
involving the partial derivatives of the orthogonal polynomials. To
set up sufficient conditions for the existence of this type of
relations is not enough with the orthogonality of the vector
polynomial family. For these relations to exist, the orthogonal
polynomial family has to be solution of an admissible potentially
self--adjoint second order partial differential equation of
hypergeometric type (\ref{Lysk1}). With these assumptions, next we
present explicit expressions for the three--term recurrence relation
of the first partial derivatives, structure relations and derivative
representations.

\subsection{The three--term recurrence relations for the first partial derivatives}
Let us define, for $j=1,2$
\begin{equation}\label{notderxsin1}
{\mathbb{Q}}_n^{(j)}=L_{n,j}\frac{\partial}{\partial x_j}{\mathbb{P}}_{n+1}
= (\frac{\partial}{\partial x_j}P_{n+1,0}^{n+1}(x,y),
\frac{\partial}{\partial x_j}P_{n,1}^{n+1}(x,y), \dots,
\frac{\partial}{\partial x_j}P_{1,n}^{n+1}(x,y))^\text{T}\,,
\end{equation}
where ${\mathbb{P}}_n$ is defined in (\ref{defPn})--(\ref{EXPP}) and $L_{n,j}$ are defined in (\ref{defLL}). Here the notation $x_{1}=x$, $x_{2}=y$ is used. Thus, we have
\begin{equation}\label{EXPDER}
\frac{\partial}{\partial x_j}{\mathbb{P}}_{n+1}= Gd_{n,n}^{(j)}\textbf{x}^n+ Gd_{n,n-1}^{(j)}\textbf{x}^{n-1}+ \dots + Gd_{n,0}^{(j)}\,\textbf{x}^0,
\end{equation}
where
\begin{equation}\label{EXPDERCOEFS}
Gd_{n,k}^{(j)}= G_{n+1,k+1}\mathbb{E}_{k+1,j}\,,\quad 0 \leq k \leq n, \quad j=1,2,
\end{equation}
are matrices of size $(n+2)\times (k+1)$.

Let $\mathcal{L}$ be the positive definite moment functional as defined in (\ref{FUNCTIONAL}) and $\{\mathbb{P}_{n}\}_{n \geq 0}$ be an orthogonal family with respect to $\mathcal{L}$, so that it satisfies the admissible partial differential equation of hypergeometric type (\ref{Lysk1}).

Then, for $n \geq 0$ and $j=1,2$, there exist matrices $A_{n,j}^{(j)}$ of size $(n+2) \times (n+3)$, $B_{n,j}^{(j)}$ of size $(n+2) \times (n+2)$, and $C_{n,j}^{(j)}$ of size $(n+2) \times (n+1)$, such that
\begin{equation}\label{RRTTDER2}
x_j\frac{\partial}{\partial
x_j}{\mathbb{P}}_{n+1}=A_{n,j}^{(j)}\frac{\partial}{\partial x_j}{\mathbb{P}}_{n+2}
+ B_{n,j}^{(j)}\frac{\partial}{\partial x_j}{\mathbb{P}}_{n+1}
+ C_{n,j}^{(j)}\frac{\partial}{\partial x_j}{\mathbb{P}}_{n}, \quad j =1, 2,
\end{equation}
with the initial conditions $\frac{\partial}{\partial x_j}{\mathbb{P}}_{0}=0$ and $\frac{\partial}{\partial x_j}{\mathbb{P}}_{1} =G_{1,1} \mathbb{E}_{1,j}$.

The explicit expressions of the matrices $A_{n,j}^{(j)}$,
$B_{n,j}^{(j)}$ and $C_{n,j}^{(j)}$ can be obtained in terms of the
values of the leading coefficients $Gd_{n,n}^{(j)}$,
$Gd_{n,n-1}^{(j)}$ and $Gd_{n,n-2}^{(j)}$, defined in
(\ref{EXPDERCOEFS}), using the Proposition \ref{pearsonEDP}, but
they are not unique.

Since
\begin{equation}\label{relppqq}
\frac{\partial}{\partial x_j}{\mathbb{P}}_{n+1}=L^\text{T}_{n,j}{\mathbb{Q}}_n^{(j)}, \qquad n \geq 0,
\end{equation}
for $j=1,2$ we can write a three--term recurrence relation for the
${\mathbb{Q}}_n^{(j)}$ polynomials, which is now unique after
multiplying by the $L^\text{T}_{n,j}$ matrices. In this way, the
following theorem can be proved.

\begin{teor}
Let $\mathcal{L}$ be the positive definite moment functional as defined in (\ref{FUNCTIONAL}) and $\{\mathbb{P}_{n}\}_{n \geq 0}$ be
an orthogonal family with respect to $\mathcal{L}$. Then, for $n \geq 0$ and $j=1,2$, there exist unique matrices $\tilde{A}_{n,j}^{(j)}$ of
size $(n+1) \times (n+2)$, $\tilde{B}_{n,j}^{(j)}$ of size $(n+1) \times (n+1)$,and $\tilde{C}_{n,j}^{(j)}$ of size $(n+1) \times n$, such that
\begin{equation}\label{RRTTQQDER2}
x_j{\mathbb{Q}}_n^{(j)}=\tilde{A}_{n,j}^{(j)}{\mathbb{Q}}_{n+1}^{(j)} +
\tilde{B}_{n,j}^{(j)}{\mathbb{Q}}_n^{(j)} +
\tilde{C}_{n,j}^{(j)}{\mathbb{Q}}_{n-1}^{(j)}, \quad j =1, 2,
\end{equation}
where
\begin{align}\label{EXPRRTTDER}
\tilde{A}_{n,j}^{(j)}&= L_{n,j}A_{n,j}^{(j)}L^\text{T}_{n+1,j},\\
\tilde{B}_{n,j}^{(j)}&=L_{n,j}B_{n,j}^{(j)}L^\text{T}_{n,j}, \\
\tilde{C}_{n,j}^{(j)}&=L_{n,j}B_{n,j}^{(j)}L^\text{T}_{n-1,j},
\end{align}
with the initial conditions ${\mathbb{Q}}_{-1}^{(j)}=(0)$ and ${\mathbb{Q}}_0^{(j)}=L_{0,j}Gd_{1,1}^{(j)}$. Here the notation $x_{1}=x$, $x_{2}=y$ is used.
\end{teor}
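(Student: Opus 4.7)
My plan is to obtain the desired $\mathbb{Q}$-recurrence by projecting the (non-unique) derivative recurrence~(\ref{RRTTDER2}) via the contraction matrix $L_{n,j}$, and then to establish uniqueness by observing that $\{\mathbb{Q}_n^{(j)}\}_{n\geq 0}$ is itself a vector orthogonal polynomial family in the sense of Definition~\ref{ortho}, to which the existence/uniqueness result containing~(\ref{RRTT}) applies directly.

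First, I would take equation~(\ref{RRTTDER2}) and use the identity~(\ref{relppqq}) to rewrite each occurrence of $\partial_{x_j}\mathbb{P}_{k+1}$ as $L^T_{k,j}\mathbb{Q}_k^{(j)}$. This yields
\[
x_j\, L^T_{n,j}\mathbb{Q}_n^{(j)} = A_{n,j}^{(j)} L^T_{n+1,j}\mathbb{Q}_{n+1}^{(j)} + B_{n,j}^{(j)} L^T_{n,j}\mathbb{Q}_n^{(j)} + C_{n,j}^{(j)} L^T_{n-1,j}\mathbb{Q}_{n-1}^{(j)}.
\]
Multiplying both sides on the left by $L_{n,j}$ and invoking $L_{n,j}L^T_{n,j}=I_{n+1}$ (together with the fact that $x_j$ is a scalar and so commutes with $L_{n,j}$), the left-hand side collapses to $x_j\mathbb{Q}_n^{(j)}$, while the right-hand side produces exactly the coefficient matrices of~(\ref{EXPRRTTDER}). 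The stated initial conditions fall out directly from the definition~(\ref{notderxsin1}) applied to $\mathbb{P}_{0}=1$ and $\mathbb{P}_{1}=G_{1,1}\mathbf{x}^{1}+G_{1,0}$.

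For the uniqueness part, I would show that $\{\mathbb{Q}_n^{(j)}\}_{n\geq 0}$ is an orthogonal polynomial vector family with respect to the modified weight $\varrho^{(1,0)}(x,y)$ (when $j=1$) or $\varrho^{(0,1)}(x,y)$ (when $j=2$) of~(\ref{pesoderiv}). Orthogonality of the partial derivatives $\partial_{x_j}\mathbb{P}_{n+1}$ with respect to $\varrho^{(r,s)}$ is already available from Section~\ref{Sec:rodrigues}, and the relation $\mathbb{Q}_n^{(j)}=L_{n,j}\partial_{x_j}\mathbb{P}_{n+1}$ transfers it to $\{\mathbb{Q}_n^{(j)}\}$ after the moment integrals are post-multiplied by $L^T_{n,j}$. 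Combined with the nonsingularity of the leading matrix coefficient $L_{n,j}G_{n+1,n+1}\mathbb{E}_{n+1,j}$ at every level, this places $\{\mathbb{Q}_n^{(j)}\}$ under the hypotheses of the existence/uniqueness theorem containing~(\ref{RRTT}), yielding uniqueness of $\tilde{A}_{n,j}^{(j)}$, $\tilde{B}_{n,j}^{(j)}$, $\tilde{C}_{n,j}^{(j)}$.

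The step I expect to be the main obstacle is verifying the nonsingularity of the leading coefficient $L_{n,j}G_{n+1,n+1}\mathbb{E}_{n+1,j}$. This reduces to a rank computation on the bidiagonal matrix $\mathbb{E}_{n+1,j}$ of~(\ref{DIAGDIF})---whose kernel consists precisely of the pure $y^{n+1}$ direction (for $j=1$) or pure $x^{n+1}$ direction (for $j=2$), which is exactly the component annihilated by the projection $L_{n,j}$---together with the invertibility of $G_{n+1,n+1}$, which is in turn guaranteed by the admissibility hypothesis~(\ref{NONU}) on the eigenvalues $\varpi_k$ via the eigenvalue analysis of the operator~(\ref{operatororiginal}).
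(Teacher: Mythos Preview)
Your derivation of~(\ref{RRTTQQDER2}) by substituting~(\ref{relppqq}) into~(\ref{RRTTDER2}) and left-multiplying by $L_{n,j}$ is exactly the route the paper indicates in the sentence preceding the theorem; the paper offers no further detail, so on the existence side you are simply spelling out what is implicit there. Your treatment of uniqueness via orthogonality of $\{\mathbb{Q}_n^{(j)}\}$ with respect to the shifted weight and an appeal to the Dunkl--Xu recurrence theorem is more explicit than anything in the paper, but is in the same spirit.

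One point deserves tightening. In your last paragraph you speak of the ``kernel'' of $\mathbb{E}_{n+1,j}$; but $\mathbb{E}_{n+1,j}$ is $(n+2)\times(n+1)$ and injective, so what you mean is its \emph{cokernel} (the missing direction in its image), which for $j=1$ is indeed $e_{n+2}$. More importantly, the inference ``$L_{n,j}\mathbb{E}_{n+1,j}$ invertible and $G_{n+1,n+1}$ invertible $\Rightarrow$ $L_{n,j}G_{n+1,n+1}\mathbb{E}_{n+1,j}$ invertible'' is false in general: with $G_{n+1,n+1}$ sandwiched in the middle, invertibility of the product is equivalent to $G_{n+1,n+1}\bigl(\operatorname{im}\mathbb{E}_{n+1,j}\bigr)\cap\ker L_{n,j}=\{0\}$, which is a genuine condition on $G_{n+1,n+1}$ and not a consequence of its invertibility alone. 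A cleaner way to close this is to argue directly from the orthogonality: once you know that each component of $\mathbb{Q}_n^{(j)}$ is orthogonal (for the weight $\varrho^{(1,0)}$ or $\varrho^{(0,1)}$) to all polynomials of total degree $<n$, positive definiteness of that weight forces the $n+1$ components of $\mathbb{Q}_n^{(j)}$ to be linearly independent modulo $\Pi_{n-1}^2$, which is exactly the nonsingularity of the leading matrix you need.
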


\subsection{First structure relations} In the one variable case, the so--called first structure relation plays an important role since, e.g. it gives rise to lowering and rising operators. Next we present the extension to the bivariate case.
\begin{teor}\label{TEOSRTRR}
Let $\{\mathbb{P}_n\}_{n\in \mathbb{N}_0}$ be a vector orthogonal polynomial family satisfying Proposition~\ref{pearsonEDP}. Then, for $n \geq 1$, there exist unique matrices $W_{n,j}$ of size $(n+1) \times (n+2)$, $S_{n,j}$ of size $(n+1) \times (n+1)$,and $T_{n,j}$ of size $(n+1) \times
n$, such that
\begin{equation}\label{SRTRR}
\phi_j(x,y)\frac{\partial}{\partial x_j}\mathbb{P}_n=W_{n,j}\mathbb{P}_{n+1}+S_{n,j}\mathbb{P}_{n}+T_{n,j}\mathbb{P}_{n-1}, \quad j =1, 2,
\end{equation}
where
\begin{equation}\label{NOT:1}
\phi_{1}(x,y)=\phi^{(1,0)}(x,y)\,,\qquad \phi_{2}(x,y)=\phi^{(0,1)}(x,y)\,,
\end{equation}
and the polynomials $\phi^{(r,s)}(x,y)$ have been introduced in (\ref{phirs}) and (\ref{Eq:phijjj}). With the notation
\begin{equation}\label{Eq:notationphi}
\phi_{j}(x,y)=\alpha_j x^2 + \beta_{j} x y + \gamma_{j} y^{2} + \delta_{j} x + \varepsilon_{j} y + \omega_{j}, \quad j=1,2,
\end{equation}
the coefficients of the structure relations are explicitly given by
\begin{align*}
W_{n,j} G_{n+1,n+1}&= G_{n,n} \mathbb{E}_{n,j} \left( \alpha_j L_{n-1,1}L_{n,1} + \beta_{j} L_{n-1,1}L_{n,2} + \gamma_{j} L_{n-1,2} L_{n,2} \right), \\
S_{n,j} G_{n,n} &= G_{n,n} \mathbb{E}_{n,j} \left( \delta_{j} L_{n-1,1} + \varepsilon_{j} L_{n-1,2} \right) - W_{n,j} G_{n+1,n} \\
& + G_{n,n-1} \mathbb{E}_{n-1,j} \left( \alpha_{j} L_{n-2,1}L_{n-1,1} + \beta_{j} L_{n-2,2} L_{n-1,1} + \gamma_{j} L_{n-2,2} L_{n-1,2} \right), \\
T_{n,j} G_{n-1,n-1}&= \omega_{j} G_{n,n}  \mathbb{E}_{n,j} - W_{n,j} G_{n+1,n-1} - S_{n,j} G_{n,n-1} \\
&+ G_{n,n-1}  \mathbb{E}_{n-1,j} \left( \delta_{j} L_{n-2,1} + \varepsilon_{j} L_{n-2,2} \right) \\
&+ G_{n,n-2}  \mathbb{E}_{n-2,j} \left( \alpha_{j} L_{n-3,1} L_{n-2,1} + \beta_{j} L_{n-3,2} L_{n-2,1} + \gamma_{j} L_{n-3,2} L_{n-2,2} \right)  .
\end{align*}
\end{teor}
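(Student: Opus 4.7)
The plan is to follow the same pattern as in the proof of Theorem \ref{TTRRenG}: first argue that $\phi_j\partial_{x_j}\mathbb{P}_n$ expands in the orthogonal basis $\{\mathbb{P}_k\}$ with only the three highest contributions $\mathbb{P}_{n+1},\mathbb{P}_n,\mathbb{P}_{n-1}$, and then read off $W_{n,j}$, $S_{n,j}$, $T_{n,j}$ by matching coefficients in the monomial basis $\{\mathbf{x}^k\}$.

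For the first step, by \eqref{EXPP} and \eqref{DERMON} the vector $\partial_{x_j}\mathbb{P}_n$ has total degree $n-1$, so $\phi_j\partial_{x_j}\mathbb{P}_n$ has total degree at most $n+1$ and admits a unique expansion $\sum_{k=0}^{n+1}E_{n,k,j}\mathbb{P}_k$. I would prove $E_{n,k,j}=0$ for $0\le k\le n-2$ by integration by parts in the variable $x_j$ against the weight $\varrho$: using the orthogonality of $\mathbb{P}_n$ together with the fact that $\phi_j\varrho=\varrho^{(1,0)}$ (respectively $\varrho^{(0,1)}$) itself solves a Pearson-type system through \eqref{phirs}--\eqref{Eq:phijjj} and \eqref{Peatypeq}, one obtains that $\partial_{x_j}(\phi_j\varrho)/\varrho$ is polynomial, and the potentially self-adjoint structure of $\mathcal{D}$ ensures that the resulting integrand is $\mathbb{P}_n$ times a polynomial of total degree strictly less than $n$, which then vanishes.

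With the truncated form \eqref{SRTRR} available, both sides can be expanded in the monomial basis. Using \eqref{Eq:notationphi} and \eqref{DEFLM}, together with $x^2\mathbf{x}^{k}=L_{k,1}L_{k+1,1}\mathbf{x}^{k+2}$, $xy\,\mathbf{x}^{k}=L_{k,1}L_{k+1,2}\mathbf{x}^{k+2}$, $y^2\mathbf{x}^{k}=L_{k,2}L_{k+1,2}\mathbf{x}^{k+2}$ (the first and third from \eqref{LLLL}, the middle one from iterating \eqref{DEFLM}), the coefficient of $\mathbf{x}^{n+1}$ on the left becomes $G_{n,n}\mathbb{E}_{n,j}(\alpha_jL_{n-1,1}L_{n,1}+\beta_jL_{n-1,1}L_{n,2}+\gamma_jL_{n-1,2}L_{n,2})$, while on the right it equals $W_{n,j}G_{n+1,n+1}$. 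Inverting $G_{n+1,n+1}$ yields $W_{n,j}$; repeating at the $\mathbf{x}^n$ and $\mathbf{x}^{n-1}$ levels, substituting each previously determined matrix and inverting the successive nonsingular leading matrices $G_{n,n}$ and $G_{n-1,n-1}$, produces $S_{n,j}$ and $T_{n,j}$. The commutation $L_{k,2}L_{k+1,1}=L_{k,1}L_{k+1,2}$ from the display following \eqref{LLLL} is then used to recast certain cross terms into the symmetric form stated in the theorem. Uniqueness is automatic because at each level the relevant leading matrix is invertible.

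The main obstacle is the truncation step. A uniform verification that $\partial_{x_j}(\phi_j\varrho)/\varrho$ has low enough degree to close the integration-by-parts estimate may need to be carried out case by case through the ten admissibility classes (i)--(x) of Section \ref{Sec:3}, using the explicit polynomials \eqref{FUNCWEI}--\eqref{gggg}; an alternative is to exploit the orthogonality of the shifted derivative family $\{\mathbb{Q}_n^{(j)}\}$ with respect to the weight $\phi_j\varrho$, in the spirit of \cite{AFPP}, which bypasses the case analysis. Once the truncation is in place, the remainder of the argument is a mechanical matching of matrix coefficients level by level.
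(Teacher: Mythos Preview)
Your proposal is correct and matches the paper's own proof in both its overall architecture and its coefficient-matching computation. The only difference is emphasis in the truncation step: the paper goes directly to what you list as the ``alternative'' --- it invokes the orthogonality of $\partial_{x_j}\mathbb{P}_n$ with respect to the weight $\phi_j\varrho$ (a consequence of the Pearson system \eqref{Peatypeq} and \eqref{phirs}) and then pairs $\phi_j\partial_{x_j}\mathbb{P}_n$ against $\mathbb{P}_\ell^{\text{T}}$ for $\ell\le n-2$ to kill the low-order coefficients --- rather than your first route via integration by parts and a case-by-case check through classes (i)--(x), which would work but is unnecessary.
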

\begin{proof}
Since
\[
\phi_j(x,y)\frac{\partial}{\partial x_j}\mathbb{P}_n
\]
are polynomials of total degree $n+1$ we can write for $j=1,2$,
\[
\phi_j(x,y)\frac{\partial}{\partial x_j}\mathbb{P}_n = \sum_{\ell=0}^{n+1} \Lambda_{n,\ell} \mathbb{P}_\ell, \quad n \geq 1.
\]
As a consequence of Section \ref{Sec:2}, the polynomials $\frac{\partial}{\partial x}\mathbb{P}_n$ are orthogonal with respect to $\phi_{1}(x,y) \varrho(x,y)$, and $\frac{\partial}{\partial y}\mathbb{P}_n$ are orthogonal with respect to $\phi_{2}(x,y) \varrho(x,y)$. If we multiply from the right the latter equation by $\mathbb{P}_n^{\text{T}}$ and apply the corresponding orthogonality we get $\Lambda_{n,\ell}=0$ for $\ell < n-1$.

The explicit expressions for the matrix coefficients are obtained from equation (\ref{SRTRR}), by substituting $\mathbb{P}_n$ as given in (\ref{EXPP}), equating the coefficients of $\mathbf{x}^k$ for $k=n$, $n-1$, $n-2$ and solving the corresponding linear system.
\end{proof}

Observe that if we know the explicit expression of the polynomials $\phi_{j}(x,y)$, $j=1,2$, by using the results given in Section \ref{Sec:3}, it is possible to obtain explicitly the matrices $W_{n,j}$, $S_{n,j}$ and $T_{n,j}$ in (\ref{SRTRR}) in the following way: substitute $\mathbb{P}_n$ as given in (\ref{EXPP}) equate the coefficients of $\mathbf{x}^k$ for $k=n$, $n-1$, $n-2$ and solve the corresponding linear system, by using (\ref{DEFLM}), (\ref{LLLL}) and (\ref{DERMON}).

\subsection{Derivative representations (or second structure relations)}
Next we present finite--type relations between the orthogonal polynomial sequence $\{\mathbb{P}_n\}$ and the sequence of the partial derivatives $\{ \frac{\partial}{\partial x_j}\mathbb{P}_n \}$.
\begin{teor}
Let $\{\mathbb{P}_n\}_{n\in \mathbb{N}_0}$ be a vector orthogonal polynomial family satisfying Proposition~\ref{pearsonEDP}. Then, for $n \geq 2$ we have
\begin{equation}\label{POLDER1}
\mathbb{P}_n=V_{n,j}\frac{\partial}{\partial x_j}\mathbb{P}_{n+1} +Y_{n,j}\frac{\partial}{\partial x_j}\mathbb{P}_n
+Z_{n,j}\frac{\partial}{\partial x_j}\mathbb{P}_{n-1},
\end{equation}
where the matrices $V_{n,j}$ of size $(n+1) \times (n+2)$, $Y_{n,j}$ of size $(n+1) \times (n+1)$, and $Z_{n,j}$ of size $(n+1) \times n$ are given by
\begin{align*}
V_{n,j}&=A_{n,j}-A_{n-1,j}^{(j)} \,, \\
Y_{n,j}&=B_{n,j}-B_{n-1,j}^{(j)} \,, \\
Z_{n,j}&=C_{n,j}-C_{n-1,j}^{(j)} \,,
\end{align*}
and the matrices $A_{n,j}$, $B_{n,j}$ and $C_{n,j}$ are given in (\ref{COEFTTR}) and the matrices $A_{n-1,j}^{(j)}$, $B_{n-1,j}^{(j)}$ and $C_{n-1,j}^{(j)}$ are introduced in (\ref{RRTTDER2}).
\end{teor}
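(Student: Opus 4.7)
The plan is to derive (\ref{POLDER1}) by differentiating the three-term recurrence relation (\ref{RRTT}) satisfied by $\{\mathbb{P}_n\}$ and combining it with a shifted version of the derivative recurrence (\ref{RRTTDER2}). Since the matrix coefficients $A_{n,j}$, $B_{n,j}$, $C_{n,j}$ appearing in (\ref{RRTT}) are independent of $x$ and $y$, applying $\partial/\partial x_j$ to both sides of (\ref{RRTT}) and using the Leibniz rule on the left yields
\[
\mathbb{P}_n + x_j\,\frac{\partial}{\partial x_j}\mathbb{P}_n \;=\; A_{n,j}\,\frac{\partial}{\partial x_j}\mathbb{P}_{n+1} + B_{n,j}\,\frac{\partial}{\partial x_j}\mathbb{P}_n + C_{n,j}\,\frac{\partial}{\partial x_j}\mathbb{P}_{n-1}.
\]

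Next I would invoke (\ref{RRTTDER2}) with $n$ replaced by $n-1$, which is permitted because the hypothesis $n\geq 2$ ensures $n-1\geq 1$. This rewrites the term $x_j\,\frac{\partial}{\partial x_j}\mathbb{P}_n$ as
\[
x_j\,\frac{\partial}{\partial x_j}\mathbb{P}_n \;=\; A_{n-1,j}^{(j)}\,\frac{\partial}{\partial x_j}\mathbb{P}_{n+1} + B_{n-1,j}^{(j)}\,\frac{\partial}{\partial x_j}\mathbb{P}_n + C_{n-1,j}^{(j)}\,\frac{\partial}{\partial x_j}\mathbb{P}_{n-1}.
\]
Substituting this identity into the previous one and solving for $\mathbb{P}_n$ yields exactly the asserted relation (\ref{POLDER1}), with
\[
V_{n,j}=A_{n,j}-A_{n-1,j}^{(j)},\qquad Y_{n,j}=B_{n,j}-B_{n-1,j}^{(j)},\qquad Z_{n,j}=C_{n,j}-C_{n-1,j}^{(j)}.
\]

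The remaining verifications are bookkeeping. Using the sizes stated after (\ref{RRTT}) and (\ref{RRTTDER2}), $A_{n,j}$ and $A_{n-1,j}^{(j)}$ are both $(n+1)\times(n+2)$ matrices, $B_{n,j}$ and $B_{n-1,j}^{(j)}$ are both $(n+1)\times(n+1)$, and $C_{n,j}$ and $C_{n-1,j}^{(j)}$ are both $(n+1)\times n$, so each difference is well defined and the dimensions of $V_{n,j}$, $Y_{n,j}$, $Z_{n,j}$ match those claimed. The only conceptual point worth mentioning is that the matrices $A_{n-1,j}^{(j)}, B_{n-1,j}^{(j)}, C_{n-1,j}^{(j)}$ from (\ref{RRTTDER2}) are, as noted in the text, not uniquely determined; however, the argument is valid for any admissible choice, and the linear combination on the right-hand side of (\ref{POLDER1}) is unambiguous because $\mathbb{P}_n$ is. I do not expect a serious obstacle here: the entire result flows from a single differentiation combined with one application of the shifted derivative recurrence, so once Theorem~\ref{TTRRenG} and (\ref{RRTTDER2}) are available, the derivative representation is essentially automatic.
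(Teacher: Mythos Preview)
Your argument is correct and is precisely the approach the paper has in mind: its proof consists of the single sentence ``The above result is a consequence of (\ref{RRTT}) and (\ref{RRTTDER2}),'' and you have simply spelled out the subtraction that this sentence encodes. The dimension checks and the remark on non-uniqueness of the $A_{n-1,j}^{(j)}$, $B_{n-1,j}^{(j)}$, $C_{n-1,j}^{(j)}$ are accurate and add nothing beyond what the paper implicitly assumes.
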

\begin{proof}
The above result is a consequence of (\ref{RRTT}) and (\ref{RRTTDER2}).
\end{proof}
Moreover, from (\ref{relppqq}) we can deduce the following relation:
\begin{equation}
\mathbb{P}_n=V_{n,j} L^\text{T}_{n,j}{\mathbb{Q}}_n^{(j)} +Y_{n,j} L^\text{T}_{n-1,j}{\mathbb{Q}}_{n-1}^{(j)}
+Z_{n,j} L^\text{T}_{n-2,j}{\mathbb{Q}}_{n-2}^{(j)}\,, \qquad n \geq 1, \quad j=1,2,
\end{equation}
where $ L^\text{T}_{n,j}$ is the transpose matrix of $L_{n,j}$ defined in (\ref{defLL}).

\section{Explicit expressions for algebraic properties in the monic case}\label{Sec:5}

As pointed out in the introduction, in this section we give the explicit expression of the coefficients $\widehat{G}_{n,n-1}$ and $\widehat{G}_{n,n-2}$ in~(\ref{EXPPMonico}) in terms of the coefficients $a$, $b_j$, $c_j$, $d_3$, $e$, $f_j$ fully characterizing the already mentioned partial differential equation~(\ref{Lysk1}). After that, results already given in the previous section will allow us to express (for monic polynomials) the three algebraic and differential properties here considered (\ref{RRTT}), (\ref{SRTRR}) and (\ref{POLDER1}), in terms of the admissible partial differential equation coefficients in (\ref{Lysk1}).
\begin{propo}\label{expcof}
Let $\widehat{\mathbb{P}}_n$ ($n\in \mathbb{N}_0$) be a monic vector polynomial, as given by the expansion~(\ref{EXPPMonico}), solution of an admissible hypergeometric type partial differential equation of the form~(\ref{Lysk1}). Then, the matrix coefficients $\widehat{G}_{n,n-1} \in \mathcal{M}^{(n+1,n)}$ and $\widehat{G}_{n,n-2}\in \mathcal{M}^{(n+1,n-1)}$ in (\ref{EXPPMonico}) can be written in term of the coefficients $a,b_j,c_j,d_3,e,f_j$ in~(\ref{Lysk1}) as:
\begin{align}\label{COEFPDE}
\widehat{G}_{n,n-1}&= \begin{pmatrix}
\tilde{g}_{1,1} & &  &  & \text{\circle{15}} \\
\tilde{g}_{2,1} & \tilde{g}_{2,2} & & &  \\
& \ddots & \ddots & &  \\
& & \tilde{g}_{n-1,n-2} & \tilde{g}_{n-1,n-1} & \\
& & & \tilde{g}_{n,n-1} & \tilde{g}_{n,n} \\
& \text{\circle{15}} & & 0 & \tilde{g}_{n+1,n}
\end{pmatrix}\,,\quad (n\geq 1)\,,
\end{align}
where, for $1 \leq i \leq n$,
\begin{align*}
\tilde{g}_{i,i}&=\frac{(n+1-i)((n-i)b_1+2(i-1)c_3+f_1)}{\varpi_{2n-2}}\,,\\
\tilde{g}_{i+1,i}&=\frac{i((i-1)b_2+2(n-i)b_3+f_2)}{\varpi_{2n-2}}\,,
\end{align*}
and
\begin{align}\label{COEFPDE2}
\widehat{G}_{n,n-2}&=\begin{pmatrix}
{g}_{1,1} & &  &  \text{\circle{15}}  \\
{g}_{2,1} & {g}_{2,2} &  &  \\
{g}_{3,1} & {g}_{3,2} & {g}_{3,3} &  \\
 \ddots& \ddots & \ddots & \\& \ddots & \ddots &\ddots \\
& {g}_{n-1,n-3}& {g}_{n-1,n-2}& {g}_{n-1,n-1}  \\
\phantom{aaa}\text{\circle{15}}&    & {g}_{n,n-2} & {g}_{n,n-1}\\
&   & 0 & {g}_{n+1,n-1}
\end{pmatrix}\,,\quad (n\geq 2)\,,
\end{align}
where, for $1 \leq i \leq n-1$,
\begin{align*}
{g}_{i,i}&=\frac{(n-i)(n+1-i)}{2\varpi_{2n-2}\varpi_{2n-3}} \\
&\times (\varpi_{2n-2}c_1+((n-i)b_1+2(i-1)c_3+f_1)((n-i-1)b_1+2(i-1)c_3+f_1)),\\
{g}_{i+1,i}& =\frac{i(n-i)}{\varpi_{2n-2}\varpi_{2n-3}}
(f_1f_2+d_3 \varpi_{2n-2} +b_3(2(n-2+2(i-2)(n-i-1))c_3 \\
&+(2n-2i-1)f_1)+(2i-1)c_3f_2+(i-1)b_2((2i-1)c_3+f_1) \\
& +(n-1-i)((i-1)b_2+(2n-2i-1)b_3+f_2)) ,\\
{g}_{i+2,i}&=\frac{i(i+1)}{2\varpi_{2n-2}\varpi_{2n-3}} \\
& \times
(\varpi_{2n-2}c_2+((i-1)b_2+2(n-i-1)b_3+f_2)(ib_2+2(n-i-1)b_3+f_2)),
\end{align*}
In all of these expressions $\varpi_{n}=na+e \neq 0$ as already shown in Eq.~(\ref{NONU}).
\end{propo}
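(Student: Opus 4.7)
The plan is to substitute the monic expansion~(\ref{EXPPMonico}) into the eigenvalue equation $\mathcal{D}\widehat{\mathbb{P}}_n=-\lambda_n\widehat{\mathbb{P}}_n$ coming from~(\ref{Lysk1}) and match coefficients of $\mathbf{x}^n$, $\mathbf{x}^{n-1}$ and $\mathbf{x}^{n-2}$ in the monomial basis. Since the operator~(\ref{operatororiginal}) has polynomial coefficients of total degree at most two, for every $k$ there exist matrices $M_{k,k}$, $M_{k,k-1}$, $M_{k,k-2}$ such that
\begin{equation*}
\mathcal{D}\mathbf{x}^k = M_{k,k}\mathbf{x}^k + M_{k,k-1}\mathbf{x}^{k-1} + M_{k,k-2}\mathbf{x}^{k-2},
\end{equation*}
and I would first compute these three blocks explicitly by letting $\mathcal{D}$ act on each monomial $x^{k-i}y^i$, using the elementary identities~(\ref{DEFLM}), (\ref{LLLL}) and (\ref{DERMON}). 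A direct check on $x^{k-i}y^i$ shows that only the $a$-- and $e$--terms of $\mathcal{D}$ preserve the total degree, and together they contribute $k((k-1)a+e)=-\lambda_k$, so $M_{k,k}=-\lambda_k\,I_{k+1}$; this makes the matching at order $\mathbf{x}^n$ automatic.

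Equating the coefficient of $\mathbf{x}^{n-1}$ yields $(\lambda_n-\lambda_{n-1})\widehat{G}_{n,n-1}=-M_{n,n-1}$, and the scalar $\lambda_n-\lambda_{n-1}=-\varpi_{2n-2}$ is nonzero by~(\ref{NONU}), so $\widehat{G}_{n,n-1}=M_{n,n-1}/\varpi_{2n-2}$. To finish this case I would read off the entries of $M_{n,n-1}$: the only terms in $\mathcal{D}$ that drop the total degree by exactly one when applied to $x^{n-p}y^p$ are $b_1x\partial_{xx}$, $2c_3y\partial_{xy}$, $f_1\partial_x$ (producing $x^{n-1-p}y^p$) and $2b_3x\partial_{xy}$, $b_2y\partial_{yy}$, $f_2\partial_y$ (producing $x^{n-p}y^{p-1}$). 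Summing these six contributions exhibits $M_{n,n-1}$ as lower bidiagonal with diagonal entries $(n-p)(b_1(n-p-1)+2c_3p+f_1)$ and subdiagonal entries $p(b_2(p-1)+2b_3(n-p)+f_2)$; dividing by $\varpi_{2n-2}$ and translating to the one--based indexing $i=p+1$ gives exactly~(\ref{COEFPDE}).

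Equating the coefficient of $\mathbf{x}^{n-2}$ produces the analogous linear system
\begin{equation*}
(\lambda_n-\lambda_{n-2})\widehat{G}_{n,n-2} = -M_{n,n-2} - \widehat{G}_{n,n-1}\,M_{n-1,n-2},
\end{equation*}
with $\lambda_n-\lambda_{n-2}=-2\varpi_{2n-3}$, again nonzero by~(\ref{NONU}); this explains the combined denominator $\varpi_{2n-2}\varpi_{2n-3}$ appearing in~(\ref{COEFPDE2}). The matrix $M_{n,n-2}$ is tridiagonal, coming from the purely constant terms $c_1\partial_{xx}$, $2d_3\partial_{xy}$, $c_2\partial_{yy}$, and the correction $\widehat{G}_{n,n-1}\,M_{n-1,n-2}$ is a product of two lower bidiagonal matrices and is therefore also tridiagonal, which accounts for the three nonzero diagonals $g_{i,i}$, $g_{i+1,i}$, $g_{i+2,i}$ of~(\ref{COEFPDE2}). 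The main obstacle will be the bookkeeping of this last step: each entry is a rational expression in $n$ and in the PDE coefficients $a$, $b_j$, $c_j$, $d_3$, $e$, $f_j$, and one must carefully combine the two diagonals of $\widehat{G}_{n,n-1}$ with the two diagonals of $M_{n-1,n-2}$, add the tridiagonal $M_{n,n-2}$, and then verify that the result coincides with the factored closed form stated in the proposition.
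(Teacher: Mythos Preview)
Your proposal is correct and follows exactly the same approach as the paper's proof, which consists of the single sentence ``Plug into equation~(\ref{Lysk1}) the expansion~(\ref{EXPPMonico}) and then make equal zero the coefficients of the column vector of monomials $\mathbf{x}^{k}$ for $k=n$, $n-1$, $n-2$.'' You have simply fleshed out the details of that matching (the scalar identities $\lambda_n-\lambda_{n-1}=-\varpi_{2n-2}$ and $\lambda_n-\lambda_{n-2}=-2\varpi_{2n-3}$, and the bidiagonal/tridiagonal structure of the intermediate matrices), and the remaining ``bookkeeping'' you flag is indeed just routine algebra.
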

\begin{proof} Plug into equation (\ref{Lysk1}) the expansion~(\ref{EXPPMonico}) and then make equal zero the coefficients of the column vector of monomials $\mathbf{x}^{k}$ (defined in (\ref{MONO})) for $k=n$, $n-1$, $n-2$.
\end{proof}

Now, having in mind that in the monic case $\widehat{G}_{n,n}= I_{n+1}$, from these results and Theorem \ref{TTRRenG}, we can deduce the following corollaries.

\begin{coro}[Three--term recurrence relations]\label{TTRRenEDP}
For monic polynomials and $j=1,2$, the coefficients of the three--term recurrence relation (\ref{RRTT}) are given in terms of the coefficients of the second order partial differential equation (\ref{Lysk1}) by
\begin{align}
A_{n,j}&=L_{n,j},\label{AAmonic} \\
B_{0,1}&=\left( -\frac{f_1}{e} \right), \quad B_{0,2}=\left( -\frac{f_2}{e} \right), \label{BB0monic} \\
B_{n,j}&=\widehat{G}_{n,n-1}L_{n-1,j}-L_{n,j}\widehat{G}_{n+1,n}, \quad n\geq 1, \label{BBmonic} \\
C_{1,1}&=\begin{pmatrix}  \frac{-c_1 e^2 + f_1 (b_1 e- a f_1)}{e^2
(a+e)} \\  \frac{-d_3 e^2 + b_3 e f_1 + c_3 e f_2 - a f_1 f_2}{e^2
(a+e)} \end{pmatrix} , \qquad
C_{1,2}=\begin{pmatrix} \frac{-d_3 f_1^2 + b_3 e f_1 + c_3 e f_2 - a f_1 f_2}{e^2 (a+e)} \\ \frac{-c_2 e^2 + f_2 (b_2 e - a f_2) }{e^2 (a+e)}  \end{pmatrix} \label{CC1monic} \\
C_{n,j}&=\widehat{G}_{n,n-2}L_{n-2,j}-L_{n,j}\widehat{G}_{n+1,n-1}-B_{n,j}\widehat{G}_{n,n-1},
\quad n \geq 2. \label{CCmonic}
\end{align}
\end{coro}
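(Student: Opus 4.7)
The plan is to obtain Corollary~\ref{TTRRenEDP} as a direct specialization of Theorem~\ref{TTRRenG} to the monic case, combined with the explicit formulas for $\widehat{G}_{n,n-1}$ and $\widehat{G}_{n,n-2}$ given in Proposition~\ref{expcof}. Since in the monic case $\widehat{G}_{n,n}=I_{n+1}$ for every $n\ge 0$, the inverse in the definition of $A_{n,j}$ becomes trivial and the right multiplications by $\widehat{G}_{n,n}^{-1}$ in the formulas for $B_{n,j}$ and $C_{n,j}$ disappear.

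First I would handle the generic range. Substituting $\widehat{G}_{n,n}=I_{n+1}$ and $\widehat{G}_{n+1,n+1}=I_{n+2}$ into~(\ref{COEFTTR}) immediately yields $A_{n,j}=L_{n,j}$, proving~(\ref{AAmonic}). The same substitution in the formula for $B_{n,j}$ ($n\ge 1$) gives~(\ref{BBmonic}), and in the formula for $C_{n,j}$ ($n\ge 2$) gives~(\ref{CCmonic}); no algebra beyond cancellation of identity factors is required at this stage.

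Next I would deal with the two initial cases $B_{0,j}$ and $C_{1,j}$, where Theorem~\ref{TTRRenG} provides the specific formulas $B_{0,j}=-A_{0,j}\widehat{G}_{1,0}$ and $C_{1,j}=-(A_{1,j}\widehat{G}_{2,0}+B_{1,j}\widehat{G}_{1,0})$. The key input is the explicit form of $\widehat{G}_{1,0}$ and $\widehat{G}_{2,0}$ obtained by evaluating Proposition~\ref{expcof} at $n=1$ and $n=2$ respectively. With $n=1$, $i=1$ one reads off $\widehat{G}_{1,0}=(f_1/e,\;f_2/e)^{\text{T}}$, and since $A_{0,1}=L_{0,1}=(1,0)$ and $A_{0,2}=L_{0,2}=(0,1)$ the products $-A_{0,j}\widehat{G}_{1,0}$ immediately reduce to the scalars $-f_1/e$ and $-f_2/e$, giving~(\ref{BB0monic}).

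For the formula~(\ref{CC1monic}) I would compute $\widehat{G}_{2,0}$ from Proposition~\ref{expcof} with $n=2$ (so $\varpi_{2n-2}=\varpi_2=2a+e$ and $\varpi_{2n-3}=\varpi_1=a+e$), evaluate the $2\times 3$ matrix $A_{1,j}=L_{1,j}$ and the $2\times 2$ matrix $B_{1,j}$ from~(\ref{BBmonic}) with the previously computed $\widehat{G}_{1,0}$ and the $n=2$ row of~(\ref{COEFPDE}), and assemble $C_{1,j}=-(L_{1,j}\widehat{G}_{2,0}+B_{1,j}\widehat{G}_{1,0})$. The main obstacle, and the only step that is not essentially automatic, is this final algebraic reduction: after placing everything over the common denominator $e^{2}(a+e)$ one must check that the term $\varpi_{2}c_1=(2a+e)c_1$ from $\widehat{G}_{2,0}$ combines with the $B_{1,j}\widehat{G}_{1,0}$ contribution to leave only $-c_1e^2+f_1(b_1e-af_1)$ in the top entry of $C_{1,1}$, and analogous cancellations in the remaining three entries. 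This is a finite but slightly delicate cross--denominator calculation; modulo that bookkeeping, the corollary follows at once from Theorem~\ref{TTRRenG} and Proposition~\ref{expcof}.
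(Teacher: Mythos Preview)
Your proposal is correct and follows precisely the approach indicated in the paper: the corollary is obtained by specializing Theorem~\ref{TTRRenG} to the monic case $\widehat{G}_{n,n}=I_{n+1}$ and reading off the low--order coefficients $\widehat{G}_{1,0}$, $\widehat{G}_{2,1}$, $\widehat{G}_{2,0}$ from Proposition~\ref{expcof}. The paper in fact gives no further detail beyond this, so your added verification of the initial cases $B_{0,j}$ and $C_{1,j}$ (including the cancellation of the $\varpi_2=2a+e$ factor) only makes the argument more explicit.
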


It has some interest to remark here that, as described in \cite{DUXU}, since
\begin{equation}
{\rm{rank}} (L_{n,j})=n+1= {\rm{rank}} (C_{n+1,j}), \quad j=1,2, \quad n \geq 0,
\end{equation}
the columns of the joint matrices
\[
L_n=\left(L_{n,1}^T \,,L_{n,2}^T \right)^T \quad \text{and}\quad C_n=\left(C_{n,1}^T \,,C_{n,2}^T \right)^T
\]
of size $(2n+2)\times(n+2)$ and $(2n+2)\times n$ respectively, are linearly independent, i.e.
\begin{equation}
{\rm{rank}} (L_{n})= n+2, \quad {\rm{rank}} (C_{n})= n.
\end{equation}
Therefore, the matrix $L_{n}$ has full rank so that there exists an unique matrix $D_n^{\dag}$ of size $(n+2)\times(2n+2)$, called the generalized inverse of $L_{n}$:
\begin{equation}
D_n^{\dag}= \left(D_{n,1} |D_{n,2} \right)=\left(L_n^TL_n\right)^{-1}L_n^T,
\end{equation}
such that
\[
D_n^{\dag}L_{n}=I_{n+2}.
\]

Moreover, using the left inverse $D_n^{\dag}$ of the joint matrix
$L_n$
\[ D_n^{\dag}=\begin{pmatrix} 1
& & && 0& & &&
\\&1/2&&\text{\circle{10}}&1/2&&\text{\circle{10}}&&\\&&\ddots&
&&\ddots&&\\& \text{\circle{10}}&& 1/2&0&&1/2&&\\&&&&0&&&1
\end{pmatrix},
\]
we can write a recursive formula for the monic orthogonal polynomials
\begin{equation}\label{RF}
\widehat{\mathbb{P}}_{n+1}=D_n^{\dag}\left[\begin{pmatrix} x
\\y \end{pmatrix}\otimes I_{n+1}-B_{n}\right]
{\widehat{\mathbb{P}}}_{n}-D_n^{\dag} C_{n}
\widehat{\mathbb{P}}_{n-1}, \quad n \geq 0,
\end{equation}
with the initial conditions $\widehat{\mathbb{P}}_{-1}=0$, $\widehat{\mathbb{P}}_{0}=1$, where $\otimes$ denotes the Kronecker product and
\begin{equation}\label{JOMA}
B_n=\left(B_{n,1}^T \,,B_{n,2}^T \right)^T\,, \quad
C_n=\left(C_{n,1}^T \,,C_{n,2}^T \right)^T\,,
\end{equation}
are matrices of size $(2n+2)\times(n+1)$ and $(2n+2)\times n$,
respectively. This recurrence (\ref{RF}) gives another
representation of \cite[(3.2.10)]{DUXU}, already presented in the
bivariate discrete case in \cite{RAG3}.

\begin{coro}[Structure relations]\label{strmonic}
For monic polynomials, $n \geq 3$ and $j=1,2$, with the notation (\ref{Eq:notationphi}) for $\phi_{j}(x,y)$ given in (\ref{NOT:1}), the coefficients of the structure relations  (\ref{SRTRR}) are given in terms of the coefficients of the second order partial differential equation (\ref{Lysk1}) by
\begin{align}
W_{n,j}&=\mathbb{E}_{n,j} \left( \alpha_{j} L_{n-1,1}L_{n,1} + \beta_{j} L_{n-1,2}L_{n,1} + \gamma_{j} L_{n-1,2} L_{n,2} \right), \label{WWmonic}  \\
S_{n,j}&=\mathbb{E}_{n,j} \left( \delta_{j} L_{n-1,1} + \varepsilon_{j} L_{n-1,2} \right)-W_{n,j} \widehat{G}_{n+1,n} \label{SS1monic} \\
 & +\widehat{G}_{n,n-1}\mathbb{E}_{n-1,j}\left(\alpha_{j} L_{n-2,1}L_{n-1,1}+\beta_{j} L_{n-2,2}L_{n-1,1} +\gamma_{j} L_{n-2,2} L_{n-1,2} \right), \nonumber \\
T_{n,j}&=\omega_{j} E_{n,j} + \widehat{G}_{n,n-1} \mathbb{E}_{n-1,j} \left( \delta_{j} L_{n-2,1} + \varepsilon_{j} L_{n-2,2} \right)-W_{n,j} \widehat{G}_{n+1,n-1} - S_{n,j} \widehat{G}_{n,n-1} \label{TT1monic} \\
 & +\widehat{G}_{n,n-2}\mathbb{E}_{n-2,j}\left(\alpha_{j} L_{n-3,1}L_{n-2,1}+\beta_{j} L_{n-3,2}L_{n-2,1} +\gamma_{j} L_{n-3,2} L_{n-2,2} \right) \nonumber .
\end{align}
\end{coro}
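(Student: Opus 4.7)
The plan is to obtain Corollary~\ref{strmonic} by direct specialization of Theorem~\ref{TEOSRTRR} to the monic normalization, followed by an appeal to Proposition~\ref{expcof} for the remaining sub--leading matrices. I would begin by substituting into the three identities of Theorem~\ref{TEOSRTRR} the monic values $\widehat{G}_{n+1,n+1}=I_{n+2}$, $\widehat{G}_{n,n}=I_{n+1}$ and $\widehat{G}_{n-1,n-1}=I_n$. Since these identity matrices appear precisely as the right factors of $W_{n,j}$, $S_{n,j}$ and $T_{n,j}$ on the left--hand sides of those identities, the equalities collapse to closed--form expressions for the three matrix coefficients in terms of the sub--leading matrices $\widehat{G}_{n+1,n}$, $\widehat{G}_{n,n-1}$, $\widehat{G}_{n+1,n-1}$, $\widehat{G}_{n,n-2}$.

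Next I would reconcile a small cosmetic discrepancy in how the mixed quadratic term of $\phi_j(x,y)$ is written. The coefficient $\beta_j$ of $xy$ in (\ref{Eq:notationphi}) produces, when one evaluates $xy\,\partial_{x_j}\widehat{\mathbb{P}}_n$ through the basic identities (\ref{DEFLM})--(\ref{defLL}) and (\ref{DERMON}), a factor of the form $\mathbb{E}_{k,j}L_{k-1,1}L_{k,2}$ in Theorem~\ref{TEOSRTRR} but is written in the corollary as $\mathbb{E}_{k,j}L_{k-1,2}L_{k,1}$. Both are equal by the commutation identity $L_{k-1,2}L_{k,1}=L_{k-1,1}L_{k,2}$ recorded just below (\ref{LLLL}), so one may pass freely from either form to the other; this verifies that the shape of (\ref{WWmonic})--(\ref{TT1monic}) agrees with that of the specialized Theorem~\ref{TEOSRTRR}.

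Finally, the remaining sub--leading matrices $\widehat{G}_{n+1,n}$, $\widehat{G}_{n,n-1}$, $\widehat{G}_{n+1,n-1}$ and $\widehat{G}_{n,n-2}$ which still appear on the right--hand sides of (\ref{SS1monic}) and (\ref{TT1monic}) are now available in fully explicit rational form from Proposition~\ref{expcof}, with denominators of type $\varpi_{2n-2}\varpi_{2n-3}$. Substituting those expressions closes the argument and writes $W_{n,j}$, $S_{n,j}$, $T_{n,j}$ entirely in the PDE data $a,b_j,c_j,d_3,e,f_j$.

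The main obstacle is purely bookkeeping rather than conceptual: one has to track the rectangular sizes of the matrices $\mathbb{E}_{k,j}$ and $L_{k,j}$ through each composition to ensure the dimensions match across all six terms in the expansion of $\phi_j(x,y)\,\partial_{x_j}\widehat{\mathbb{P}}_n$, and to verify that the identity matrices coming from monicity really do cancel from both sides of the three identities of Theorem~\ref{TEOSRTRR} (and not merely from the left, which would leave awkward $G_{k,k}^{-1}$ factors). The lower--bound $n\ge 3$ in the corollary is dictated by the presence of $L_{n-3,\cdot}$ and $\widehat{G}_{n,n-2}$ in (\ref{TT1monic}); the small cases $n=1,2$ are handled separately as initial conditions, which is routine given that Theorem~\ref{TEOSRTRR} already covers $n\ge 1$.
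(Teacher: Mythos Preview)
Your proposal is correct and follows essentially the same approach as the paper: the corollary is obtained by specializing Theorem~\ref{TEOSRTRR} to the monic case $\widehat{G}_{k,k}=I_{k+1}$ and then invoking Proposition~\ref{expcof} for the sub--leading matrices. The paper in fact gives no separate proof, presenting the result as an immediate consequence of the preceding general theorem together with the explicit form of $\widehat{G}_{n,n-1}$ and $\widehat{G}_{n,n-2}$; your observation about the commutation identity $L_{k-1,2}L_{k,1}=L_{k-1,1}L_{k,2}$ to reconcile the $\beta_j$ term is a correct and useful detail that the paper leaves implicit.
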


\begin{coro}[Derivative representations]\label{DRenEDP}
For monic polynomials, $n \geq 2$ and $j=1,2$, the coefficients of the derivative representations (\ref{POLDER1}) are given in terms of the coefficients of the second order partial differential equation (\ref{Lysk1}) by
\begin{align}
V_{n,j}&=(L_{n,j}\mathbb{E}_{n+1,j})^{-1}, \label{VV1monic} \\
Y_{n,j}&=(\widehat{G}_{n,n-1}-V_{n,j}L_{n,j}\widehat{G}_{n+1,n}\mathbb{E}_{n,j})V_{n-1,j}, \label{YY1monic} \\
Z_{n,j}&=(\widehat{G}_{n,n-2}-V_{n,j}L_{n,j}\widehat{G}_{n+1,n-1}\mathbb{E}_{n-1,j}-Y_{n,j}L_{n-1,j}\widehat{G}_{n,n-1}\mathbb{E}_{n-1,j})V_{n-2,j}. \label{ZZ1monic}
\end{align}
\end{coro}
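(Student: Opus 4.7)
The plan is to substitute the monic expansion (\ref{EXPPMonico}) of $\widehat{\mathbb{P}}_n$ and the induced expansions of $\partial_{x_j}\widehat{\mathbb{P}}_{n+1}$, $\partial_{x_j}\widehat{\mathbb{P}}_{n}$, $\partial_{x_j}\widehat{\mathbb{P}}_{n-1}$ — obtained from (\ref{EXPDER})--(\ref{EXPDERCOEFS}) specialised to $\widehat{G}_{m,m}=I_{m+1}$ — into the derivative representation (\ref{POLDER1}), and then to read off the matrix coefficients of $\mathbf{x}^n$, $\mathbf{x}^{n-1}$, $\mathbf{x}^{n-2}$. All the matrix coefficients that appear in those expansions involve only $\widehat{G}_{n+1,n}$, $\widehat{G}_{n+1,n-1}$, $\widehat{G}_{n,n-1}$, $\widehat{G}_{n,n-2}$ (given explicitly by Proposition \ref{expcof}) together with the elementary derivation matrices $\mathbb{E}_{m,j}$ of (\ref{DIAGDIF}), so the whole proof reduces to a sequence of matrix identifications.

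Matching coefficients of $\mathbf{x}^n$, $\mathbf{x}^{n-1}$, $\mathbf{x}^{n-2}$ successively yields the triangular system $I_{n+1}=V_{n,j}\mathbb{E}_{n+1,j}$, then $\widehat{G}_{n,n-1}=V_{n,j}\widehat{G}_{n+1,n}\mathbb{E}_{n,j}+Y_{n,j}\mathbb{E}_{n,j}$, and finally $\widehat{G}_{n,n-2}=V_{n,j}\widehat{G}_{n+1,n-1}\mathbb{E}_{n-1,j}+Y_{n,j}\widehat{G}_{n,n-1}\mathbb{E}_{n-1,j}+Z_{n,j}\mathbb{E}_{n-1,j}$. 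These three equations are solved in cascade: the first gives $V_{n,j}$, the second then determines $Y_{n,j}$ once $V_{n,j}$ is known, and the third yields $Z_{n,j}$ using the previously computed $V_{n,j}$ and $Y_{n,j}$; substituting the explicit forms of $\widehat{G}_{n,n-1}$ and $\widehat{G}_{n,n-2}$ furnished by Proposition \ref{expcof} then expresses the final answer in terms of the PDE data of (\ref{Lysk1}).

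The main technical obstacle is that the matrices $\mathbb{E}_{m,j}$ are rectangular of size $(m+1)\times m$ and have a one-dimensional right kernel, so each of the three equations above admits a whole affine family of solutions for $V$, $Y$, $Z$. The key observation that unlocks the explicit formulas (\ref{VV1monic})--(\ref{ZZ1monic}) is that, as a direct computation from (\ref{defLL}) and (\ref{DIAGDIF}) shows, the product $L_{m,j}\mathbb{E}_{m+1,j}$ is an invertible diagonal square matrix. The natural way to restore uniqueness is to pass through the vectors $\mathbb{Q}_n^{(j)}=L_{n,j}\partial_{x_j}\mathbb{P}_{n+1}$ of (\ref{notderxsin1}) via the identity (\ref{relppqq}); this eliminates the degenerate direction and forces $V_{n,j}=(L_{n,j}\mathbb{E}_{n+1,j})^{-1}$. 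Inserting this value into the second matrix equation and recognising $(L_{n-1,j}\mathbb{E}_{n,j})^{-1}=V_{n-1,j}$ yields (\ref{YY1monic}); an entirely analogous manipulation of the third equation, together with the already-obtained expressions for $V_{n,j}$ and $Y_{n,j}$, yields (\ref{ZZ1monic}).
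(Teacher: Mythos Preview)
Your argument is correct and follows precisely the coefficient--matching procedure that the paper uses throughout Sections~\ref{Sec:4} and~\ref{Sec:5} (the paper states the corollary without a separate proof, simply as a specialization to the monic case). You also put your finger on the one genuine subtlety the paper glosses over: the matrices $\mathbb{E}_{m,j}$ are rectangular, so the raw equations coming from (\ref{POLDER1}) are underdetermined, and the formulas (\ref{VV1monic})--(\ref{ZZ1monic}) are really the coefficients of the $\mathbb{Q}^{(j)}$--expansion $\widehat{\mathbb{P}}_n=V_{n,j}\mathbb{Q}_n^{(j)}+Y_{n,j}\mathbb{Q}_{n-1}^{(j)}+Z_{n,j}\mathbb{Q}_{n-2}^{(j)}$ (cf.\ (\ref{POLDERAPP}) in the Appell example), where the square matrices $L_{m,j}\mathbb{E}_{m+1,j}$ restore invertibility. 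One small presentational slip: once you ``pass through the $\mathbb{Q}$ vectors'' the three matched equations acquire the factors $L_{n,j}$, $L_{n-1,j}$, $L_{n-2,j}$ in front of each $\widehat{G}$--block, so the ``second matrix equation'' into which you insert $V_{n,j}$ should already read $\widehat{G}_{n,n-1}=V_{n,j}L_{n,j}\widehat{G}_{n+1,n}\mathbb{E}_{n,j}+Y_{n,j}L_{n-1,j}\mathbb{E}_{n,j}$ (and similarly for the third); otherwise the sizes do not match. With that clarification the cascade solves exactly to (\ref{VV1monic})--(\ref{ZZ1monic}).
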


The results obtained in Section \ref{Sec:4} and \ref{Sec:5} can be applied to any polynomial solution of an admissible partial differential equation of hypergeometric type. In the next section we study in detail the monic Appell polynomials \cite{APP}. The non--monic case is also briefly discussed.

\section{Example: Monic Appell polynomials}\label{Sec:6}

In this section we give explicit expressions of all the aforementioned matrices appea\-ring in the three--term recurrence
relations and structure relations, by applying the results obtained in the previous sections to the case of monic Appell polynomials. Other
examples of orthogonal polynomials of two variables can be treated in a similar way.

In 1882, Appell \cite{APP} introduced a three parameter family of polynomials of degree $n+m$ in terms of generalized Kamp\'{e} de F\'{e}riet hypergeometric
series \cite{SRIK} which in the monic case and reduced to two parameters can be written as \cite[Equation (12), p. 271]{ERDII}
\begin{multline}\label{ferietappell}
{\widehat{\mathrm{A}}}_{n,m}^{(\alpha,
\beta)}(x,y)=(-1)^{n+m}\frac{(\alpha)_{n}\,(\beta)_{m}}{(\alpha+\beta+n+m)_{n+m}}\,
\\ \times \feriet{1:1;1}{0:1;1}{\alpha+\beta+n+m}{-n;-m}{-}{\alpha;\beta}{x}{y},
\end{multline}
where $\alpha> 0$ and $\beta> 0$ and $(\alpha)_n$ denotes the Pochhammer symbol.

\subsection{Second order partial differential equation}
The admissible partial differential equation of hypergeometric type satisfied by ${\widehat{\mathrm{A}}}_{n,m}^{(\alpha, \beta)}(x,y)$ is \cite[Equation (15), p. 272]{ERDII}
\begin{multline}\label{equationappell}
x(1-x)  \frac{\partial^2 {\widehat{\mathrm{A}}}_{n,m}^{(\alpha,
\beta)}}{\partial x^2}- 2xy \frac{\partial^2
{\widehat{\mathrm{A}}}_{n,m}^{(\alpha, \beta)}}{\partial x
\partial y}+ y(1-y) \frac{\partial^2 {\widehat{\mathrm{A}}}_{n,m}^{(\alpha,
\beta)}}{\partial y^2}+ (\alpha-(\alpha+\beta+1)x) \frac{\partial
{\widehat{\mathrm{A}}}_{n,m}^{(\alpha, \beta)}}{\partial x }\\+
(\beta-(\alpha+\beta+1)y) \frac{\partial
{\widehat{\mathrm{A}}}_{n,m}^{(\alpha, \beta)}}{\partial
y}+(n+m)(\alpha+\beta+n+m) {\widehat{\mathrm{A}}}_{n,m}^{(\alpha,
\beta)}=0.
\end{multline}

From (\ref{derLysk1}) we obtain that the partial derivatives
\[
z^{(r,s)}(x,y) = \frac{\partial^{r+s}}{\partial x^{r}\,\partial y^{s}} {\widehat{\mathrm{A}}}_{n,m}^{(\alpha,
\beta)} (x,y),
\]
satisfy the admissible partial differential equation of hypergeometric type
\begin{multline*}
x(1-x)  \frac{\partial^2}{\partial x^2} z^{(r,s)}(x,y)
- 2xy \frac{\partial^2}{\partial x \partial y} z^{(r,s)}(x,y)
+ y(1-y) \frac{\partial^2}{\partial y^2} z^{(r,s)}(x,y) + \\
+ (\alpha+r-(\alpha+\beta+1+2(r+s))x) \frac{\partial}{\partial x } z^{(r,s)}(x,y) \\ +
(\beta+s-(\alpha+\beta+1+2(r+s))y) \frac{\partial }{\partial y} z^{(r,s)}(x,y) \\
+(n+m-r-s)(\alpha+\beta+n+m+r+s) z^{(r,s)}(x,y)=0.
\end{multline*}

It is easy to check that, from the above differential equation, the matrices $\widehat{G}_{n,n-1}$ and $\widehat{G}_{n,n-2}$ in the expansion
\begin{multline}\label{devAA}
{\widehat{{\mathbb{A}}}}_n={\widehat{{\mathbb{A}}}}_n^{(\alpha,
\beta)}(x,y) =({\widehat{\mathrm{A}}}_{n,0}^{(\alpha,
\beta)}(x,y),\dots, {\widehat{\mathrm{A}}}_{n-i,i}^{(\alpha,
\beta)}(x,y),\dots, {\widehat{\mathrm{A}}}_{0,n}^{(\alpha,
\beta)}(x,y))^\text{T} \\
= \textbf{x}^n+ \widehat{G}_{n,n-1}\textbf{x}^{n-1}+ \dots +
\widehat{G}_{n,0}\,\textbf{x}^0\,,
\end{multline}
are explicitly given from the general expressions (\ref{COEFPDE}) and (\ref{COEFPDE2}).

\subsection{Orthogonality}
From (\ref{WEIF}) we obtain the following weight function
\begin{equation}\label{pesoappell}
\varrho(x,y) = x^{\alpha-1}y^{\beta-1}\,.
\end{equation}
From (\ref{Eq:alpha}) the positivity of $\varrho(x,y)$ yields the following triangular domain
\begin{equation}\label{regionappell}
{\mathcal{R}}=\{(x,y): x>0,\; y>0,\; x+y< 1\}.
\end{equation}
Therefore, the orthogonality property reads as
\begin{equation}
\iint_{\mathcal{R}} \varrho(x,y)
{\widehat{\mathrm{A}}}_{n,m}^{(\alpha, \beta)}(x,y)
{\widehat{\mathrm{A}}}_{k,s}^{(\alpha, \beta)}(x,y)\,dx\,dy
=\Lambda_{N}^{(\alpha, \beta)} \delta_{N,K} \,,
\end{equation}
where $N=n+m$, $K=k+s$.

\subsection{Three term recurrence relations}
For $n\geq 0$, monic Appell polynomials satisfy the three term recurrence relations
\begin{align*}
x {\widehat{{\mathbb{A}}}}_{n}&= L_{n,1}
{\widehat{{\mathbb{A}}}}_{n+1} + B_{n,1}
{\widehat{{\mathbb{A}}}}_{n} + C_{n,1} {\widehat{{\mathbb{A}}}}_{n-1} \,, \\
y {\widehat{{\mathbb{A}}}}_{n}&= L_{n,2}
{\widehat{{\mathbb{A}}}}_{n+1} + B_{n,2}
{\widehat{{\mathbb{A}}}}_{n} + C_{n,2}
{\widehat{{\mathbb{A}}}}_{n-1} \,,
\end{align*}
with the initial conditions ${\widehat{{\mathbb{A}}}}_{0}=1$ and ${\widehat{{\mathbb{A}}}}_{-1}=0$, where ${\widehat{{\mathbb{A}}}}_n$ is defined in (\ref{devAA}) and $L_{n,j}$ are defined in (\ref{defLL}). Using (\ref{BB0monic}) and (\ref{BBmonic}) the recursion coefficients $B_{n,j}$ are given by
\begin{align}
{B}_{n,1}&=
\begin{pmatrix}
b_{0,0} & 0&  &  & \text{\circle{15}} \\
b_{1,0} & b_{1,1} & 0 & &  \\
& \ddots & \ddots & \ddots &  \\
& & b_{n-1,n-2}& b_{n-1,n-1} & 0 \\
& \text{\circle{15}}  & & b_{n,n-1} & b_{n,n}
\end{pmatrix}\,,
\end{align}
where
\begin{align*}
b_{i,i}&=-\frac{(n-i)(\alpha+n-1-i)}{2n-1+\alpha+\beta}+ \frac{(n+1-i)(\alpha+n-i)}{2n+1+\alpha+\beta},\quad 0 \leq i \leq n,\\
b_{i+1,i}&=-\frac{2(i+1)(\beta+i)}{(2n-1+\alpha+\beta)(2n+1+\alpha+\beta)}
,\quad 0 \leq i \leq n-1,
\end{align*}
and
\begin{equation}
{B}_{n,2}=
\begin{pmatrix}
\tilde{b}_{0,0} & \tilde{b}_{0,1}&  &  & \text{\circle{15}} \\
0 & \tilde{b}_{1,1} &\tilde{b}_{1,2}  & &  \\
& \ddots & \ddots & \ddots &  \\
& & & \tilde{b}_{n-1,n-1} & \tilde{b}_{n-1,n} \\
& \text{\circle{15}}  & & 0 & \tilde{b}_{n,n}
\end{pmatrix}\,,
\end{equation}
with
\begin{align*}
{\tilde{b}}_{i,i}&=1+ \frac{i(2n-i+\alpha)}{2n-1+\alpha+\beta}-\frac{(i+1)(\alpha+2n+1-i)}{2n+1+\alpha+\beta},\quad 0 \leq i \leq n,\\
{\tilde{b}}_{i,i+1}& =-
\frac{2(n-i)(\alpha+n-1-i)}{(2n-1+\alpha+\beta)(2n+1+\alpha+\beta)},\quad
0 \leq i \leq n-1.
\end{align*}
Moreover, using (\ref{CC1monic}) and (\ref{CCmonic}) we have
\begin{equation}
{C}_{n,1}=
\begin{pmatrix}
{c}_{0,0} & & & &  \text{\circle{15}}  \\
{c}_{1,0} & {c}_{1,1} &  & &  \\
 {c}_{2,0}& {c}_{2,1} & {c}_{2,2} & & \\
& \ddots & \ddots & \ddots&\\
&\text{\circle{15}} &{c}_{n-1,n-3}& {c}_{n-1,n-2}& {c}_{n-1,n-1}  \\
&   & & {c}_{n,n-2} & {c}_{n,n-1}
\end{pmatrix}\,,
\end{equation}
where
\begin{align*}
{c}_{i,i}&=\frac{(n-i)(\alpha+n-1-i)(n+i+\beta)(n-1+i+\alpha+\beta)}{(2n+\alpha+\beta)(2n-1+\alpha+\beta)^2(2n-2+\alpha+\beta)},\quad 0 \leq i \leq n-1,\\
{c}_{i+1,i}& =- \frac{(i+1)(\beta+i)(2(n-i-1)(n+i+\beta)+\alpha(2n+\alpha+\beta-2))}{(2n+\alpha+\beta)(2n-1+\alpha+\beta)^2(2n-2+\alpha+\beta)} ,\quad 0 \leq i \leq n-1,\\
{c}_{i+2,i}&
=\frac{(i+2)(i+1)(\beta+i)(\beta+i+1)}{(2n+\alpha+\beta)(2n-1+\alpha+\beta)^2(2n-2+\alpha+\beta)}
,\quad 0 \leq i \leq n-2,
\end{align*}
and
\begin{equation}
{C}_{n,2}=
\begin{pmatrix}
{\tilde{c}}_{0,0} & {\tilde{c}}_{0,1}&  &  & \text{\circle{15}}  \\
{\tilde{c}}_{1,0} & {\tilde{c}}_{1,1} & {\tilde{c}}_{1,2} & & \\
& \ddots & \ddots & \ddots &\\
& {\tilde{c}}_{n-2,n-3}& {\tilde{c}}_{n-2,n-2}& {\tilde{c}}_{n-2,n-1}  \\
& & {\tilde{c}}_{n-1,n-2}& {\tilde{c}}_{n-1,n-1}  \\
& \text{\circle{15}}   &  & {\tilde{c}}_{n,n-1}
\end{pmatrix}\,,
\end{equation}
with
\begin{align*}
{\tilde{c}}_{i,i}&=-\frac{(n-i)(\alpha+n-1-i)(\beta(2n-2+\beta)+\alpha(2i+\beta)+2i(2n-1-i))}{(2n+\alpha+\beta)(2n-1+\alpha+\beta)^2(2n-2+\alpha+\beta)},\\
{\tilde{c}}_{i+1,i}& =\frac{(i+1)(\alpha+2n-1-i)(\beta+i)(\alpha+\beta+2n-2-i)}{(2n+\alpha+\beta)(2n-1+\alpha+\beta)^2(2n-2+\alpha+\beta)},
\end{align*}
for $0 \leq i \leq n-1$ and
\[
{\tilde{c}}_{i,i+1}=\frac{(n-i)(n-1-i)(\alpha+n-1-i)(\alpha+n-2-i)}{(2n+\alpha+\beta)(2n-1+\alpha+\beta)^2(2n-2+\alpha+\beta)},
\]
for $0 \leq i \leq n-2$.

\subsection{First structure relations}
The partial differential equation (\ref{equationappell}) for monic Appell polynomials corresponds to cases (vi), (ix) and (x) in Section \ref{Sec:3}. Therefore, we obtain that
\begin{equation}\label{Eq:phiappell}
\phi^{(r,s)}(x,y)= [x(1-x-y)]^{r}\, [y(1-x-y)]^{s}.
\end{equation}

As a consequence, the structure relations (\ref{SRTRR}) satisfied by monic Appell polynomials defined in (\ref{devAA}) are given by
\begin{align}\label{SRAPP}
x(1-x-y)\frac{\partial}{\partial
x} {\widehat{\mathbb{A}}}_n&=W_{n,1} {\widehat{\mathbb{A}}}_{n+1}+S_{n,1} {\widehat{\mathbb{A}}}_{n}+T_{n,1} {\widehat{\mathbb{A}}}_{n-1},\\
y(1-x-y)\frac{\partial}{\partial y}
{\widehat{\mathbb{A}}}_n&=W_{n,2} {\widehat{\mathbb{A}}}_{n+1}+S_{n,2} {\widehat{\mathbb{A}}}_{n}+T_{n,2} {\widehat{\mathbb{A}}}_{n-1},
\end{align}
for $n \geq 1$, where using (\ref{WWmonic}) we get
\begin{equation}\label{EC:MATWN1}
{W}_{n,1}=
\begin{pmatrix}
{w}_{0,0} &{w}_{0,1} & 0 &  & \text{\circle{15}} \\
 0& {w}_{1,1} & {w}_{1,2} & &  \\
& \ddots  & \ddots & &  \\
& & {w}_{n-1,n-1}& {w}_{n-1,n} & 0 \\
& \text{\circle{15}}  & & {w}_{n,n}& {w}_{n,n+1}
\end{pmatrix}\,,
\end{equation}
with ${w}_{i,i}=-n+i={w}_{i,i+1}$, $0 \leq i \leq n$, and
\begin{equation}\label{EC:MATWN2}
{W}_{n,2}=
\begin{pmatrix}
\tilde{w}_{0,0} &\tilde{w}_{0,1} & 0 &  & \text{\circle{15}} \\
0 & \tilde{w}_{1,1} & \tilde{w}_{1,2} & &  \\
&  & \ddots & \ddots &  \\
& & \tilde{w}_{n-1,n-1}& \tilde{w}_{n-1,n} & 0 \\
& \text{\circle{15}}  & & \tilde{w}_{n,n} & \tilde{w}_{n,n+1}
\end{pmatrix}\,,
\end{equation}
with $\tilde{w}_{i,i}=-i=\tilde{w}_{i,i+1}$, $0 \leq i \leq n$.

Moreover, from (\ref{SS1monic}) we obtain
\begin{equation}\label{EC:MATSN1}
{S}_{n,1}=
\begin{pmatrix}
{s}_{0,0} & {s}_{0,1}&  &  \text{\circle{15}}  \\
{s}_{1,0} & {s}_{1,1} & {s}_{1,2} &  \\
 &  \ddots & \ddots &\\
&{s}_{n-1,n-2} & {s}_{n-1,n-1}& {s}_{n-1,n}  \\
& \text{\circle{15}}   & 0 & 0
\end{pmatrix}\,,
\end{equation}
with
\begin{align*}
{s}_{i,i}&=-\frac{(n-i)(-n+(2n-1)i-4i^2 + (n-2-3i) \beta + \alpha (n-1+i+\alpha+\beta))}{(2n-1+\alpha+\beta)(2n+1+\alpha+\beta)}, \\
{s}_{i,i+1}& =-\frac{(n-i)(n-1-i+\alpha)(2i+1+\alpha+\beta)}{(2n-1+\alpha+\beta)(2n+1+\alpha+\beta)},
\end{align*}
for $0 \leq i \leq n-1$, and
\[
{s}_{i+1,i} =\frac{2(i+1)(n-1-i)(\beta+i)}{(2n-1+\alpha+\beta)(2n+1+\alpha+\beta)},
\]
for $\quad 0 \leq i \leq n-2$. Also, using (\ref{SS1monic}) we have
\begin{equation}\label{EC:MATSN2}
{S}_{n,2}=
\begin{pmatrix}
0 & 0&  &  & \text{\circle{15}} \\
\tilde{s}_{1,0} & \tilde{s}_{1,1} &\tilde{s}_{1,2}  & &  \\
0 & \tilde{s}_{2,1} &\tilde{s}_{2,2}  & &  \\
& \ddots & \ddots & \ddots &  \\
& &\tilde{s}_{n-1,n-2} & \tilde{s}_{n-1,n-1} & \tilde{s}_{n-1,n} \\
& \text{\circle{15}}  & 0& \tilde{s}_{n,n-1} & \tilde{s}_{n,n}
\end{pmatrix}\,,
\end{equation}
with
\[
\tilde{s}_{i,i}=\frac{i(\beta - {\beta}^2 - i + \beta\,i + 4\,i^2 -
  \alpha\,\left( -2 + \beta + 3\,i - 2\,n \right)  -
  2\,\left( -1 + \beta + 3\,i \right) \,n + 2\,n^2)}{(2n+1+\alpha+\beta)(2n-1+\alpha+\beta)},\\
\]
for $1 \leq i \leq n$, and
\begin{align*}
\tilde{s}_{i+1,i}&=- \frac{\left( 1 + i \right) \,\left( \beta + i \right) \,
      \left( -1 + \alpha + \beta - 2\,i + 2\,n \right) }{
      \left( -1 + \alpha + \beta + 2\,n \right) \,
      \left( 1 + \alpha + \beta + 2\,n \right) },\\
\tilde{s}_{i,i+1}&=\frac{2\,i\,\left( -i + n \right) \,
    \left( -1 + \alpha - i + n \right) }{\left( -1 + \alpha +
      \beta + 2\,n \right) \,
    \left( 1 + \alpha + \beta + 2\,n \right) },\\
\end{align*}
for $0 \leq i \leq n-1$. Furthermore, from (\ref{TT1monic}) it holds
\begin{equation}\label{EC:MATTN1}
{T}_{n,1}=
\begin{pmatrix}
{t}_{0,0} & {t}_{0,1}&  & &  \text{\circle{15}}  \\
{t}_{1,0} & {t}_{1,1} & {t}_{1,2} &  &\\
{t}_{2,0} & {t}_{2,1} & {t}_{2,2} & {t}_{2,3}& \\
& \ddots & \ddots & \ddots  &\\
& {t}_{n-2,n-4}& {t}_{n-2,n-3}& {t}_{n-2,n-2}& {t}_{n-2,n-1}  \\
& & {t}_{n-1,n-3}& {t}_{n-1,n-2}& {t}_{n-1,n-1}  \\
& \text{\circle{15}}&   & 0 & 0
\end{pmatrix}\,,
\end{equation}
where
\begin{align*}
{t}_{i,i}&=  \frac{\left( n-i \right) \,\left( n-1 + \alpha - i
\right) }
  {\left( -2 + \alpha + \beta + 2\,n \right) \,
    {\left( -1 + \alpha + \beta + 2\,n \right) }^2\,
    \left( \alpha + \beta + 2\,n \right) } \\ &\times \left( {\beta}^2\,\left( 1 + i \right)  +
  i^2\,\left( 1 + 3\,i \right)  +
  \alpha\,\beta\,\left( 1 + n \right)  +
  {\alpha}^2\,\left( -i + n \right)  \right. \\ & \left. +
  \beta\,\left( i\,\left( 3 + 4\,i \right)  +
     n\,\left( -2\,i + n \right)  \right)  +
  n\,\left( -\left( \left( -2 + i \right) \,i \right)  +
     n\,\left( -1 - i + n \right)  \right)  \right. \\ & \left. +
  \alpha\,\left( i\,\left( 2 + i \right)  +
     n\,\left( -1 - 2\,i + 2\,n \right)  \right) \right) ,\quad 0 \leq i \leq n-1,\\
{t}_{i,i+1}& = \frac{(n-i)(n-i-1)(\alpha+n-2-i)(\alpha+n-i-1)(\alpha+\beta+n+i)}{(2n+\alpha+\beta)(2n-1+\alpha+\beta)^{2}(2n-2+\alpha+\beta)},\,\, 0 \leq i \leq n-2,\\
{t}_{i+1,i}& =  \frac{(\beta+i)(n-i-1)(i+1)}{(2n+\alpha+\beta)(2n-1+\alpha+\beta)^{2}(2n-2+\alpha+\beta)}
\\ & \times (\alpha(\alpha+\beta+n+i-1)+\beta(n-2i-3)+(-2+(2n-5)i-3i^{2})) ,\,\, 0 \leq i \leq n-2,\\
{t}_{i+2,i}&=   -\frac{(\beta+i)(\beta+i+1)(n-i-2)(i+1)(i+2)}{(2n+\alpha+\beta)(2n-1+\alpha+\beta)^{2}(2n-2+\alpha+\beta)},\quad 0 \leq i \leq n-3,
\end{align*}
and using (\ref{TT1monic})
\begin{equation}\label{EC:MATTN2}
{T}_{n,2}=
\begin{pmatrix}
0 & 0&  &  & \text{\circle{15}} \\
\tilde{t}_{1,0} & \tilde{t}_{1,1} &\tilde{t}_{1,2}  & &  \\
\tilde{t}_{2,0} & \tilde{t}_{2,1} &\tilde{t}_{2,2}  &\tilde{t}_{2,3} &  \\
& \ddots & \ddots & \ddots &  \\
& \tilde{t}_{n-2,n-4}& \tilde{t}_{n-2,n-3} & \tilde{t}_{n-2,n-2} & \tilde{t}_{n-2,n-1} \\
& &\tilde{t}_{n-1,n-3} & \tilde{t}_{n-1,n-2} & \tilde{t}_{n-1,n-1} \\
& \text{\circle{15}}  & & \tilde{t}_{n,n-2} &\tilde{t}_{n,n-1}
\end{pmatrix}\,,
\end{equation}
where
\begin{align*}
\tilde{t}_{i,i}&=\frac{i\,\left( -i + n \right) \,
    \left( -1 + \alpha - i + n \right) }{\left( -2 + \alpha +
      \beta + 2\,n \right) \,
    {\left( -1 + \alpha + \beta + 2\,n \right) }^2\,
    \left( \alpha + \beta + 2\,n \right) } \\ & \times
(\alpha\,\beta + {\beta}^2 - i\,\left( 1 + 3\,i - 4\,n \right)  -
  \beta\,\left( 2 + i - 2\,n \right)  +
  \alpha\,\left( -1 + 2\,i - n \right)  - n\,\left( 1 + n \right)) ,\\ & 1 \leq i \leq n-1,\\
\tilde{t}_{i,i+1}&=-\frac{i\,\left( -1 - i + n \right) \,
      \left( -i + n \right) \,
      \left( -2 + \alpha - i + n \right) \,
      \left( -1 + \alpha - i + n \right) }{\left( -2 + \alpha +
        \beta + 2\,n \right) \,
      {\left( -1 + \alpha + \beta + 2\,n \right) }^2\,
      \left( \alpha + \beta + 2\,n \right) } ,\quad 1 \leq i \leq n-2,\\
\tilde{t}_{i+1,i}&=\frac{\left( 1 + i \right) \,\left( \beta + i \right) }
  {\left( -2 + \alpha + \beta + 2\,n \right) \,
    {\left( -1 + \alpha + \beta + 2\,n \right) }^2\,
    \left( \alpha + \beta + 2\,n \right) } \\ & \times \left( -3\,{\left( 1 + i \right) }^3 +
  \left( 1 + n \right) \,\left( \alpha + n \right) \,
   \left( \alpha + \beta + 2\,n \right)  +
  {\left( 1 + i \right) }^2\,
   \left( 1 + 4\,\alpha + \beta + 8\,n \right) \right. \\ & \left. -
  \left( 1 + i \right) \,\left( \alpha\,
      \left( 3 + \alpha \right)  -
     \left( -2 + \beta \right) \,\beta + 4\,n + 6\,\alpha\,n +
     6\,n^2 \right) \right) ,\quad 0 \leq i \leq n-1,\\
\tilde{t}_{i+2,i}&=\frac{\left( 1 + i \right) \,\left( 2 + i \right) \,
    \left( \beta + i \right) \,\left( 1 + \beta + i \right) \,
    \left( -2 + \alpha + \beta - i + 2\,n \right) }{\left( -2 +
      \alpha + \beta + 2\,n \right) \,
    {\left( -1 + \alpha + \beta + 2\,n \right) }^2\,
    \left( \alpha + \beta + 2\,n \right) },\quad
0 \leq i \leq n-2.
\end{align*}

\subsection{Derivative representations or second structure relations}
The monic Appell poly\-no\-mials defined in (\ref{devAA}) satisfy the derivative representations
\begin{equation}\label{POLDERAPP}
{\widehat{\mathbb{A}}}_n=V_{n,1}{\mathbb{Q}}_{n}^{(j)}+Y_{n,1}{\mathbb{Q}}_{n-1}^{(j)}+Z_{n,1}{\mathbb{Q}}_{n-2}^{(j)},\quad n \geq 2, \quad j=1,2,
\end{equation}
where ${\mathbb{Q}}_{n}^{(1)}=L_{n,1}\frac{\partial}{\partial x} {\widehat{{\mathbb{A}}}}_{n+1}$ and ${\mathbb{Q}}_{n}^{(2)}=L_{n,2}\frac{\partial}{\partial y} {\widehat{{\mathbb{A}}}}_{n+1}$. In this example the matrices $V_{n,j}$ defined in (\ref{VV1monic}) are given by
\begin{equation}\label{EC:MATVN1}
{V}_{n,1}=
\begin{pmatrix}
{v}_{0,0} & &  &  & \text{\circle{15}} \\
 & {v}_{1,1} &  & &  \\
&  & \ddots & &  \\
& & & {v}_{n-1,n-1} &  \\
& \text{\circle{15}}  & & & {v}_{n,n}
\end{pmatrix}\,,\quad \text{with}\quad {v}_{i,i}= \frac{1}{n+1-i},\quad 0 \leq i \leq
n\,,
\end{equation}
and
\begin{equation}\label{EC:MATVN2}
{V}_{n,2}=
\begin{pmatrix}
\tilde{v}_{0,0} & &  &  & \text{\circle{15}} \\
 & \tilde{v}_{1,1} &  & &  \\
&  & \ddots & &  \\
& & & \tilde{v}_{n-1,n-1} &  \\
& \text{\circle{15}}  & & 0 & \tilde{v}_{n,n}
\end{pmatrix}\,,\quad \text{with} \quad \tilde{v}_{i,i}=\frac{1}{i+1},\quad 0 \leq i \leq
n\,.
\end{equation}

Moreover, from (\ref{YY1monic}) we have
\begin{equation}\label{EC:MATYN1}
{Y}_{n,1}=
\begin{pmatrix}
{y}_{0,0} & &  &  \text{\circle{15}}  \\
{y}_{1,0} & {y}_{1,1} &  &  \\
 &  \ddots & \ddots &\\
& & {y}_{n-1,n-2}& {y}_{n-1,n-1}  \\
& \text{\circle{15}}   & 0 & {y}_{n,n-1}
\end{pmatrix}\,,
\end{equation}
where
\begin{align*}
{y}_{i,i}&=\frac{2i+1-\alpha+\beta}{(2n+1+\alpha+\beta)(2n-1+\alpha+\beta)},\\
{y}_{i+1,i}&=-\frac{2(i+1)(\beta+i)}{(n-i)(2n+1+\alpha+\beta)(2n-1+\alpha+\beta)}
,\quad 0 \leq i \leq n-1,
\end{align*}
and
\begin{equation}\label{EC:MATYN2}
{Y}_{n,2}=
\begin{pmatrix}
\tilde{y}_{0,0} & &  &  & \text{\circle{15}} \\
\tilde{y}_{1,0} & \tilde{y}_{1,1} &  & &  \\
& \ddots & \ddots &  &  \\
& & \tilde{y}_{n-2,n-3} & \tilde{y}_{n-2,n-2} &  \\
& & & \tilde{y}_{n-1,n-2} & \tilde{y}_{n-1,n-1} \\
& \text{\circle{15}}  & & 0 & \tilde{y}_{n,n-1}
\end{pmatrix}\,,
\end{equation}
where
\begin{align*}
\tilde{y}_{i,i}&=-\frac{2(n-i)(n-1-i+\alpha)}{(1+i)(2n+1+\alpha+\beta)(2n-1+\alpha+\beta)},\\
 \tilde{y}_{i+1,i}&= \frac{2n-1-2i+\alpha-\beta}{(2n+1+\alpha+\beta)(2n-1+\alpha+\beta)},\quad 0 \leq i \leq n-1.
\end{align*}
Also, from (\ref{ZZ1monic}) we have
\begin{equation}\label{EC:MATZN1}
{Z}_{n,1}=
\begin{pmatrix}
{z}_{0,0} & &  &  \text{\circle{15}}  \\
{z}_{1,0} & {z}_{1,1} &  &  \\
{z}_{2,0} & {z}_{2,1} & {z}_{2,2} &  \\
\ddots& \ddots & \ddots & \\
& {z}_{n-2,n-4}& {z}_{n-2,n-3}& {z}_{n-2,n-2}  \\
& & {z}_{n-1,n-3}& {z}_{n-1,n-2}  \\
& \text{\circle{15}}   & 0 & {z}_{n,n-2}
\end{pmatrix}\,,
\end{equation}
where for $0 \leq i \leq n-2$,
\begin{align*}
{z}_{i,i}&=-\frac{(n-i)(n-1-i+\alpha)(n+i+\beta)}{(2n+\alpha+\beta)(2n-1+\alpha+\beta)^2(2n-2+\alpha+\beta)},\\
{z}_{i+1,i}& =\frac{(i+1)(-2(i+1)+\alpha-\beta)(\beta+i)}{(2n+\alpha+\beta)(2n-1+\alpha+\beta)^2(2n-2+\alpha+\beta)},\\
{z}_{i+2,i}&=\frac{(i+1)(i+2)(\beta+i)(\beta+i+1)}{(n-1-i)(2n+\alpha+\beta)(2n-1+\alpha+\beta)^2(2n-2+\alpha+\beta)},\\
\end{align*}
and
\begin{equation}\label{EC:MATZN2}
{Z}_{n,2}=
\begin{pmatrix}
\tilde{z}_{0,0} & &  &  \text{\circle{15}}  \\
\tilde{z}_{1,0} & \tilde{z}_{1,1} &  &  \\
\tilde{z}_{2,0} & \tilde{z}_{2,1} & \tilde{z}_{2,2} &  \\
\ddots & \ddots & \ddots & \\
 & \tilde{z}_{n-2,n-4}& \tilde{z}_{n-2,n-3}& \tilde{z}_{n-2,n-2}  \\
& & \tilde{z}_{n-1,n-3}& \tilde{z}_{n-1,n-2}  \\
& \text{\circle{15}}   & 0 & \tilde{z}_{n,n-2}
\end{pmatrix}\,,
\end{equation}
where for $0 \leq i \leq n-2$,
\begin{align*}
\tilde{z}_{i,i}&=\frac{\left( -1 - i + n \right) \,\left( -i + n \right) \,
    \left( -2 + \alpha - i + n \right) \,
    \left( -1 + \alpha - i + n \right) }{\left( 1 + i \right) \,
    \left( -2 + \alpha + \beta + 2\,n \right) \,
    {\left( -1 + \alpha + \beta + 2\,n \right) }^2\,
    \left( \alpha + \beta + 2\,n \right) },\\
\tilde{z}_{i+1,i}& =-\left( \frac{\left( -1 - i + n \right) \,
      \left( -2 + \alpha - i + n \right) \,
      \left( \alpha - \beta + 2\,\left( -1 - i + n \right)
        \right) }{\left( -2 + \alpha + \beta + 2\,n \right) \,
      {\left( -1 + \alpha + \beta + 2\,n \right) }^2\,
      \left( \alpha + \beta + 2\,n \right) } \right),\\
 \tilde{z}_{i+2,i}&=- \frac{\left( 2 + i \right) \,
      \left( 1 + \beta + i \right) \,
      \left( -2 + \alpha - i + 2\,n \right) }{\left( -2 +
        \alpha + \beta + 2\,n \right) \,
      {\left( -1 + \alpha + \beta + 2\,n \right) }^2\,
      \left( \alpha + \beta + 2\,n \right) }.
\end{align*}

\subsection{Non--monic orthogonal solutions of (\ref{equationappell})}

For any non--monic orthogonal polynomial solution of the partial differential equation (\ref{equationappell}) it is possible to obtain the main differential and algebraic properties by using the results given in Section \ref{Sec:4}. In this Section we give some relations for two concrete non--monic solutions of  (\ref{equationappell}), orthogonal with respect to (\ref{pesoappell}) in the domain (\ref{regionappell}).

On one hand, also in 1882, Appell considered a family of non--monic polynomials solution of the partial differential equation (\ref{equationappell}). This orthogonal family can obtained from the Rodrigues formula (\ref{Eq:Rodrigues}) (see \cite[Equation (11), p. 271]{ERDII}) using the weight (\ref{Eq:phiappell})
\begin{equation}\label{Eq:108}
F_{n,m}^{(\alpha,\beta)}(x,y)
=\frac{x^{1-\alpha} y^{1-\beta}}{(\alpha)_{n} \, (\beta)_{m}} \frac{\partial^{n+m}}{\partial x^{n} \, \partial y^{m}} \left[ x^{n+\alpha-1} \, y^{m+\beta-1} \, (1-x-y)^{n+m} \,\right] \,.
\end{equation}
These polynomials can also be obtained from the classical Appell's orthogonal polynomials defined in \cite[Equation (6), p. 63]{SUE} by taking $\gamma =\alpha+\beta$.

Clearly, both, monic Appell polynomials defined in (\ref{ferietappell}) and the non--monic family (\ref{Eq:108}), form a biorthogonal system in the domain (\ref{regionappell}) with respect to the weight function (\ref{pesoappell}) \cite[Equation (17), p. 272]{ERDII})
\[
\iint_{{\mathcal{R}}} x^{\alpha-1} y^{\beta-1}  F_{n,m}^{(\alpha,\beta)}(x,y)  {\widehat{\mathrm{A}}}_{k,l}^{(\alpha,\beta)}(x,y) \,{\rm{d}}x\,{\rm{d}}y = \delta_{nk} \delta_{ml} \Lambda_{n,m}\,, \quad \alpha,\beta>0.
\]

Then, if we denote
\begin{multline}\label{devFF}
{\mathbb{F}}_n={\mathbb{F}}_n^{(\alpha,\beta)}(x,y)
=(\mathrm{F}_{n,0}^{(\alpha, \beta)}(x,y),\dots, \mathrm{F}_{n-i,i}^{(\alpha, \beta)}(x,y),\dots, \mathrm{F}_{0,n}^{(\alpha, \beta)}(x,y))^\text{T} \\
= G_{n,n}^{F} \textbf{x}^n+ G_{n,n-1}^{F}\textbf{x}^{n-1}+ G_{n,n-2}^{F}\textbf{x}^{n-2} +\cdots + G_{n,0}^{F}\,\textbf{x}^0\,,
\end{multline}
we have the following formula linking both solutions in column polynomial vector form
\begin{equation}\label{matrixconnection1}
{\mathbb{F}}_n = G_{n,n}^{F} \widehat{{\mathbb{A}}}_n,
\end{equation}
where $\widehat{{\mathbb{A}}}_n$ is defined in (\ref{devAA}) and the entries of the matrix $G_{n,n}^{F}=(g_{i,j}^{F}(n))$ of size
$(n+1)\times(n+1)$ have the following explicit form
\[
g_{i,j}^{F}(n) = (-1)^{n} \binom{n}{j} \frac{(\alpha+n-i)_{n-j}\,(\beta+i)_{j}}{(\alpha)_{n-j} \, (\beta)_{j}}\,, \qquad 0 \leq i,j \leq n\,.
\]
Once the matrix $G_{n,n}^{F}$ is known, applying the formulae given in Section \ref{Sec:4} it is possible to obtain the coefficients of the thee--term recurrence relations, structure relations and derivative representations for this non--monic family (\ref{Eq:108}).

On the other hand, let us consider the non--monic Koornwinder family defined in \cite[Section 2.4.2, p. 86]{DUXU} with $\alpha\to \alpha-1/2$, $\beta \to \beta-1/2$ and $\gamma=1/2$:
\begin{equation}\label{Eq:KK}
K_{n,m}^{(\alpha,\beta)}(x,y)=P_{n}^{(2m+\beta,\alpha-1)}(2x-1)(1-x)^m P_{m}^{(0,\beta-1)}(\frac{2y}{1-x}-1), \qquad \alpha,\beta>0.
\end{equation}
As already mentioned, this family is another polynomial solution of the partial differential equation (\ref{equationappell}) orthogonal with respect to (\ref{pesoappell}) in the domain (\ref{regionappell}), where $P_{n}^{(\alpha,\beta)}(x)$ are the Jacobi polynomials \cite{KoeSwa94}. Therefore, monic and non--monic Appell polynomials and this family form also biorthogonal systems.  If we denote
\begin{multline*}
{\mathbb{K}}_n={\mathbb{K}}_n^{(\alpha, \beta)}(x,y)
=(\mathrm{K}_{n,0}^{(\alpha, \beta)}(x,y),\dots, \mathrm{K}_{n-i,i}^{(\alpha, \beta)}(x,y),\dots, \mathrm{K}_{0,n}^{(\alpha, \beta)}(x,y))^\text{T} \\
= G_{n,n}^{K} \textbf{x}^n+ G_{n,n-1}^{K}\textbf{x}^{n-1}+ G_{n,n-2}^{K}\textbf{x}^{n-2} +\cdots + G_{n,0}^{K}\,\textbf{x}^0\,,
\end{multline*}
then we have
\begin{equation}\label{matrixconnection2}
{\mathbb{K}}_n = G_{n,n}^{K} \widehat{{\mathbb{A}}}_n,
\end{equation}
where the entries of the matrix $G_{n,n}^{K}=(g_{i,j}^{K}(n))$ of size $(n+1)\times(n+1)$ have the following explicit form
\[
g_{i,j}^{K}(n) = \begin{cases}
0, & i<j \, , \\
\displaystyle{\frac{(\alpha+\beta+n+i)_{n-i}(\beta+j)_{i}}{(n-i)!\,j!\,(i-j)!}}, & i \geq j \,,
\end{cases}
\]
for $0 \leq i,j \leq n$.

Relations (\ref{matrixconnection1}) and (\ref{matrixconnection2}) between monic (\ref{ferietappell}) and non--monic (\ref{Eq:108}) Appell polynomials, and between monic Appell (\ref{ferietappell}) and Koornwinder (\ref{Eq:KK}) polynomials solve the following connection problems
\begin{align*}
F_{n-\ell,\ell}^{(\alpha,\beta)}(x,y)&= \sum_{j=0}^{n} g_{\ell,j}^{F}(n) {\widehat{\mathrm{A}}}_{n-j,j}^{(\alpha,\beta)}(x,y), \quad 0 \leq \ell \leq n, \\
K_{n-\ell,\ell}^{(\alpha,\beta)}(x,y)&= \sum_{j=0}^{n} g_{\ell,j}^{K}(n) {\widehat{\mathrm{A}}}_{n-j,j}^{(\alpha,\beta)}(x,y), \quad 0 \leq \ell \leq n,
\end{align*}
between the corresponding scalar orthogonal polynomial families.

Finally, we would like to mention here that our goal is not to exploit all possible situations covered by our approach, but to emphasize its systematic character, which allow one to implement it in any computer algebra system, here the \emph{Mathematica} \cite{WOL} symbolic language has been used.

%%%%%%%%%%%%%%%%%%%%%%%
\section*{Acknowledgments}
%%%%%%%%%%%%%%%%%%%%%%%

This work was partially supported by Ministerio de Educaci\'on y Ciencia of Spain under grants MTM2006--07186, MTM2009--14668--C02--01, MTM2009--14668--C02--02, and MTM2008--06689-C02, cofinanced by the European Community fund FEDER. A. Ronveaux also thanks the Departamento de Matem\'{a}tica Aplicada II of Universidade de Vigo for the kind invitations and financial support.

\end{document}